\numberwithin{equation}{section}
\theoremstyle{plain}
\newtheorem{theorem}{Theorem}
\newtheorem{lemma}{Lemma}
\newtheorem{proposition}{Proposition}
\numberwithin{theorem}{section}
\numberwithin{lemma}{section}
\numberwithin{corollary}{section}
\numberwithin{proposition}{section}
\theoremstyle{definition}
\newtheorem{definition}{Definition}
\newtheorem{example}{Example}
\newtheorem{remark}{Remark}
\numberwithin{definition}{section}
\numberwithin{example}{section}
\numberwithin{remark}{section}
\newcommand{\R}{{\mathbb R}}
\newcommand{\N}{{\mathbb N}}
\newcommand{\Om}{\Omega}
\providecommand{\vint}[1]{\mathchoice
          {\mathop{\vrule width 5pt height 3 pt depth -2.5pt
                  \kern -9pt \kern 1pt\intop}\nolimits_{\kern -5pt{#1}}}
          {\mathop{\vrule width 5pt height 3 pt depth -2.6pt
                  \kern -6pt \intop}\nolimits_{\kern -3pt{#1}}}
          {\mathop{\vrule width 5pt height 3 pt depth -2.6pt
                  \kern -6pt \intop}\nolimits_{\kern -3pt{#1}}}
          {\mathop{\vrule width 5pt height 3 pt depth -2.6pt
                  \kern -6pt \intop}\nolimits_{\kern -3pt{#1}}}}
\newcommand{\eps}{\varepsilon}
\newcommand{\loc}{\mathrm{loc}}
\newcommand{\BV}{\mathrm{BV}}
\newcommand{\liploc}{\mathrm{Lip}_{\mathrm{loc}}}
\newcommand{\ch}{\text{\raise 1.3pt \hbox{$\chi$}\kern-0.2pt}}
\DeclareMathOperator{\dive}{div}
\DeclareMathOperator{\diam}{diam}
\DeclareMathOperator{\Lip}{Lip}
\DeclareMathOperator{\Var}{Var}
\DeclareMathOperator{\pV}{pV}
\DeclareMathOperator{\PV}{PV}
\DeclareMathOperator{\eV}{eV}
\begin{document}
\title[BV functions on one-dimensional spaces]{Functions of bounded variation on complete and connected one-dimensional metric spaces
}
\author{Panu Lahti}
\address{Panu Lahti,  Institut f\"ur Mathematik, Universit\"at Augsburg, Universit\"atsstr. 14, 86159 Augsburg, Germany, {\tt panu.lahti@math.uni-augsburg.de}}
\author{Xiaodan Zhou}
\address{Xiaodan Zhou, Department of Mathematical Sciences, Worcester Polytechnic Institute, Worcester, MA 01609, USA, {\tt xzhou3@wpi.edu}}

\subjclass[2010]{30L99, 26A45, 54E35}
\keywords{Function of bounded variation, one-dimensional metric space,
Federer's characterization of sets of finite perimeter}

\begin{abstract}
In this paper, we study functions of bounded variation on a complete and connected metric space with finite one-dimensional Hausdorff measure. The definition of BV functions on a compact interval based on pointwise variation is extended to this general setting. We show this definition of BV functions is equivalent to the BV functions introduced by Miranda \cite{Mir}. Furthermore, we study  the necessity of conditions on the underlying space in Federer's characterization of sets of finite perimeter on metric measure spaces. In particular, our examples show that the doubling and Poincar\'e inequality conditions are essential in showing that
a set has finite perimeter if the codimension one Hausdorff measure of the measure-theoretic boundary is finite.
\end{abstract}

\maketitle

\section{Introduction}

 Functions of bounded variation, also known as BV functions, have been extensively studied and widely applied in different areas including the calculus of variations, hyperbolic conservation laws, and minimal surfaces \cite{AFP, Bressan, Gi}. In the context of metric measure spaces, the notion of functions of bounded variation is introduced by Miranda \cite{Mir} and it has attracted significant attention in recent years (e.g. \cite{A1,AmDi,KKST,Mar2,Mar1}). Motivated by the observation that various function classes including Sobolev functions and BV functions defined on the real line $\R$ have simple characterizations, in this work we focus our study on BV functions in one-dimensional metric spaces. 
Our main result gives a simple alternative definition of BV functions in a general one-dimensional space based on pointwise variation.


Let $\Omega$ denote an open set in the Euclidean space $\mathbb{R}^n$.
A function $u\in L_{\loc}^1(\Omega)$ is said to have bounded variation in $\Omega$ if 
\[
\|Du\|(\Om):=\sup\left\{\int_\Omega u\dive \varphi\, dx :\,  \varphi\in C_c^1(\Omega; \mathbb{R}^n),\, |\varphi|\le 1\right\}<\infty.
\] 
By the Riesz representation theorem, the class of functions with bounded variation in $\Omega$, denoted by $\BV(\Omega)$, is the collection of functions whose weak first partial derivatives are Radon measures. An equivalent characterization of BV functions is given as the $L^1$ limits of sequences
of smooth functions with gradients bounded in $L^1$. By replacing smooth functions
with locally Lipschitz functions and the absolute value of the gradient by a
local Lipschitz constant, Miranda \cite{Mir}
introduced functions of bounded variation on a complete doubling metric
measure space $(X, d, \mu)$ supporting a Poincar\'e inequality. Equivalent definitions of BV functions on complete and separable metric measure spaces are studied by Ambrosio and Di Marino \cite{AmDi}. They relax the locally Lipschitz functions in Miranda's definition to a more general class of functions, with the local Lipschitz constants replaced by upper gradients. We recall the definition of BV functions on general metric measure spaces using upper gradients. 

\begin{definition}\label{BV Miranda}
 Given an open set $\Om\subset X$ and a function $u$ on $\Om$,
the total variation of $u$ in $\Om$ is defined by
\[
\|Du\|(\Om):=\inf\left\{\liminf_{i\to\infty}\int_\Om g_{u_i}\,d\mu:\, u_i\to u\textrm{ in } L^1_{\loc}(\Om)\right\},
\]
where each $g_{u_i}$ is an upper gradient of $u_i$
in $\Om$. A function $u$ is said to have bounded variation on $\Omega$
if $\|Du\|(\Om)<\infty$. 
\end{definition}

On the real line $\mathbb{R}$, various function classes usually
have simpler characterizations. For example,
upon choosing a good representative, we can identify a Sobolev function $u\in W^{1,p}([a,b])$ with an absolutely continuous function with $p$-integrable derivative \cite[Theorem 1, Page 163]
{EvGa}. Functions of bounded variation on $\mathbb{R}$ can also be characterized by pointwise variation. Recall that the pointwise variation of a function
$u\colon [a,b]\to \mathbb{R}$ is defined as
\begin{equation}\label{eq:pointwise variation classical}
\PV(u,[a,b]):=\sup\left\{\sum_{k=1}^{n-1}|u(t_{k})-u({t_{k+1}})|,\ a\le t_1\le 
\ldots \le t_n\le b\right\}.
\end{equation}

If $\Omega \subset \mathbb{R}$ is open, the pointwise variation $\PV(u,\Omega)$ is defined as $\sum_I \PV(u, I)$, where the sum runs along all the closed intervals in $\Om$.
The essential variation $\eV(u, \Omega)$ is defined as
\[
\eV(u,\Om):=\inf\left\{\PV(v, \Om):\, u=v \text{ a.e. in }\Om\right\}.
\]
For $u\in L_{\loc}^1(\Omega)$, we have $\eV(u,\Omega)=\|Du\|(\Om)$
\cite[Theorem 3.27]{AFP}.

The above characterizations of function classes can be extended
to general one-dimensional metric spaces.
Let $X$ be a complete and connected metric space with finite one-dimensional Hausdorff measure $\mathcal{H}^1(X)<\infty$. In \cite{Zhou}, the notion of absolutely continuous functions is generalized and Newtonian Sobolev functions are characterized by these absolutely continuous functions. Functions of bounded variations on curves in metric measure spaces are studied by Martio \cite{Mar2, Mar1}. 
In this work, we investigate the pointwise variation characterizations of BV functions on the above
one-dimensional space. We first give the definition:

\begin{definition}\label{BV 1d}
Let $X$ be a complete connected metric measure space with $\mathcal{H}^1(X)<\infty$. For a function $v$ on $X$, we define the pointwise variation as
\[
\pV(v,X):=\sup\left\{\sum_j |v\circ\gamma_j(\ell_{j})-v\circ\gamma_j(0)|\right\},
\]
where the supremum is taken over all finite collections of
pairwise disjoint injective
arc-length parametrized curves $\gamma_j\colon[0,\ell_{j}]\to X$. Then we define
\[
\Var(u,X):=\inf\{\pV(v,X),\,v=u\textrm{ a.e. on }X\}.
\]
A function $u\colon X\to \R$ has
bounded pointwise variation
if $\Var(u,X)<\infty$.

\end{definition}
It can be shown that when $X$ is an interval,
we have
$\Var(u,X)=\eV(u,X)$.

\begin{remark}
 In the above definition, one could replace $|v\circ\gamma_j(\ell_{j})-v\circ\gamma_j(0)|$  with $\PV(v\circ\gamma_j,[0,\ell_{j}])$ for each simple curve. Lemma \ref{lem:PV and pV} shows that the two quantities are comparable.

\end{remark}

We say that a function $\widetilde{u}$ is a good representative of $u$ if
$u=\widetilde{u}$ almost everywhere and $\Var(u,X)=\pV(\widetilde{u},X)$.
We show that every function $u$ with $\Var(u,X)<\infty$ admits a good representative.
\begin{lemma}[Existence of a good representative]
Suppose that $(X,d,\mathcal H^1)$ is a complete and connected metric measure space
with $\mathcal{H}^1(X)<\infty$. If $\Var (u,X)<\infty$, then there exists a function
$\widetilde{u}$ on $X$ with $\widetilde{u}=u$ a.e. and 
\[
\pV(\widetilde{u}, X)=\Var(u,X)=\inf\{\pV(v,X):\,v=u\textrm{ a.e. on }X\}.
\]
\end{lemma}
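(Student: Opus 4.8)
The plan is to realize $\Var(u,X)$ as a limit along a minimizing sequence of representatives, pass to a pointwise limit of that sequence, and conclude by lower semicontinuity of $\pV$ together with the trivial bound $\Var(u,X)\le\pV(v,X)$ valid for every representative $v$ of $u$. We may assume $X$ has more than one point, the one-point case being trivial (every injective arc-length curve then has zero length, so $\pV\equiv 0$). Recall that, since $X$ is complete and connected with $\mathcal H^1(X)<\infty$, it is a compact, arcwise connected continuum, so any two distinct points of $X$ are joined by an injective arc-length parametrized curve $\gamma\colon[0,\ell]\to X$ with $\ell\le\mathcal H^1(X)<\infty$.

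First I would choose $v_i$ with $v_i=u$ $\mathcal H^1$-a.e.\ and $\pV(v_i,X)\to\Var(u,X)=:V$, so that $C_0:=\sup_i\pV(v_i,X)<\infty$. Testing Definition \ref{BV 1d} with the single-curve collection $\{\gamma\}$ joining two points $x,y$ gives $|v_i(x)-v_i(y)|\le\pV(v_i,X)\le C_0$, so each $v_i$ has oscillation at most $C_0$ on all of $X$. Put $N:=\bigcup_i\{x\in X:v_i(x)\ne u(x)\}$; this is $\mathcal H^1$-null, and $X\setminus N\ne\emptyset$ since $\mathcal H^1(X)\ge\diam X>0$. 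All $v_i$ agree with $u$ on $X\setminus N$, so fixing a point $x^\ast\in X\setminus N$ the oscillation bound yields $|v_i(x)|\le|u(x^\ast)|+C_0=:M$ for every $x\in X$ and every $i$. This uniform pointwise bound is what lets the next step run.

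Next, by Tychonoff's theorem the cube $[-M,M]^N$ is compact in the product topology, so $(v_i|_N)_i$ admits a subnet $(v_{i_\beta}|_N)_\beta$ converging pointwise on $N$ to some $w\colon N\to[-M,M]$. Define $\widetilde u:=u$ on $X\setminus N$ and $\widetilde u:=w$ on $N$. Then $v_{i_\beta}(x)\to\widetilde u(x)$ for every $x\in X$ — the net is constantly $u(x)$ off $N$, and this is the definition of $w$ on $N$ — and $\widetilde u=u$ $\mathcal H^1$-a.e.\ because $\widetilde u=u$ on $X\setminus N$. For any finite collection of pairwise disjoint injective arc-length curves $\gamma_j\colon[0,\ell_j]\to X$, the quantity $\sum_j|v\circ\gamma_j(\ell_j)-v\circ\gamma_j(0)|$ depends on $v$ through finitely many point values only, hence
\[
\sum_j|\widetilde u\circ\gamma_j(\ell_j)-\widetilde u\circ\gamma_j(0)|=\lim_\beta\sum_j|v_{i_\beta}\circ\gamma_j(\ell_j)-v_{i_\beta}\circ\gamma_j(0)|\le\lim_\beta\pV(v_{i_\beta},X)=V,
\]
the last equality because a subnet of the convergent sequence $(\pV(v_i,X))_i$ has the same limit. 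Taking the supremum over all such collections gives $\pV(\widetilde u,X)\le V=\Var(u,X)$, while $\widetilde u=u$ a.e.\ forces $\Var(u,X)\le\pV(\widetilde u,X)$ directly from the definition of $\Var$. Hence $\pV(\widetilde u,X)=\Var(u,X)$, so $\widetilde u$ is a good representative.

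The hard part is the uniform bound step: extracting from a minimizing sequence a \emph{pointwise} (not merely $L^\infty$) uniform estimate, after which compactness does the rest; the exceptional set $N$ may be uncountable, which is why a net/Tychonoff argument replaces a sequential diagonalization. It is also essential that $\pV$ in Definition \ref{BV 1d} is built from the endpoint differences $|v\circ\gamma_j(\ell_j)-v\circ\gamma_j(0)|$ rather than from $\PV(v\circ\gamma_j,[0,\ell_j])$, since this makes each test functional continuous in finitely many values of $v$; with the $\PV$-form of the Remark one would instead bound $\PV(\widetilde u\circ\gamma_j,[0,\ell_j])$ below by finite endpoint sums and pass to the limit term by term, at the cost of extra bookkeeping. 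One could also avoid nets by using that a one-dimensional continuum is a countable union of injective arcs, applying Helly's selection theorem on each arc and diagonalizing, and then defining $\widetilde u$ on the residual $\mathcal H^1$-null set by a one-sided limit along an arc; the argument sketched above is preferable because it treats every point of $X$ at once.
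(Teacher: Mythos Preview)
Your argument is correct. The paper takes a genuinely different route: it first proves an extension lemma (Proposition~\ref{prop:extension from a set of full measure}) showing that if $D\subset X$ has full measure and $\pV_D(v,X)<\infty$, then $v|_D$ extends to a function $v_e$ on all of $X$ with $\pV(v_e,X)=\pV_D(v,X)$; the extension at a point $x\notin D$ is defined as a one-sided limit of $v$ along a fixed curve through $x$, and a two-case geometric analysis (extend or truncate the test curves near their endpoints) shows $\pV(v_e,X)\le\pV_D(v,X)$. The good representative is then obtained by observing that every term of a minimizing sequence agrees with a fixed representative $v$ on a common full-measure set $D_0=\bigcap_i D_i$, so $\pV_{D_0}(v,X)\le\pV(u_i,X)$ for all $i$, and the extension of $v|_{D_0}$ does the job.

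Your compactness-plus-lower-semicontinuity argument is more direct for this lemma alone and avoids the curve-by-curve extension construction; the key observation that each test functional in the definition of $\pV$ depends on only finitely many point values is exactly what makes Tychonoff compactness usable. The paper's route costs more up front but isolates the extension lemma as a reusable tool: it is invoked again to prove lower semicontinuity of $\Var$ under $L^1$ convergence (Proposition~\ref{prop:lower semicontinuity}) and to reduce to $\{0,1\}$-valued good representatives in the characteristic-function case (Proposition~\ref{prop:only if direction for E}). Your closing remark about Helly selection on arcs plus one-sided limits on the residual null set is essentially the paper's strategy in disguise.
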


We show that the class of BV functions given by Definition \ref{BV 1d} is equivalent to the BV functions given in Definition \ref{BV Miranda}. The main theorem is stated below: 

\begin{theorem}[Main Theorem]\label{thm:equivalence}
Suppose that $(X,d,\mathcal H^1)$ is a complete and connected metric measure space
with $\mathcal{H}^1(X)<\infty$.
Let $u$ be a function on $X$. Then the following hold:
\begin{itemize} 
\item[(1)] If $\Vert Du\Vert(X)<\infty$, then $\Var(u,X)\le \Vert Du\Vert(X)$. 
\item[(2)] Suppose there exists a constant $C_0$ such that for all $x\in X$
\begin{equation}\label{eq:blow up condition liminf}
\liminf_{r\to 0}\frac{\mathcal H^1(B(x,r))}{r}<C_0
\end{equation}
holds.
 If $\Var(u,X)<\infty$, then $\Vert Du\Vert(X)<\infty$.
\end{itemize}
\end{theorem}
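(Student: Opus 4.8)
The plan is as follows. Since $X$ is complete, connected and $\mathcal H^1(X)<\infty$, it is a compact continuum: a connected set has diameter at most its $\mathcal H^1$-measure (project to a line), and a bounded set of finite $\mathcal H^1$ is totally bounded; consequently $X$ is arcwise connected, $L^1_{\loc}(X)=L^1(X,\mathcal H^1)$, and every function of finite pointwise variation is bounded with oscillation at most its variation. For part (1) I would fix a sequence $u_i\to u$ in $L^1(X)$ with upper gradients $g_{u_i}$ and $\int_X g_{u_i}\,d\mathcal H^1\to\Vert Du\Vert(X)$. The key estimate is that for any $w$ with upper gradient $g_w$ and any finite family of pairwise disjoint injective arc-length curves $\gamma_j\colon[0,\ell_j]\to X$, the upper gradient inequality along each $\gamma_j$ together with the fact that an injective arc-length parametrization pushes Lebesgue measure on $[0,\ell_j]$ forward to $\mathcal H^1$ restricted to its image (and the images are pairwise disjoint) gives $\sum_j|w\circ\gamma_j(\ell_j)-w\circ\gamma_j(0)|\le\sum_j\int_{\gamma_j}g_w\,ds=\int_{\bigcup_j\gamma_j([0,\ell_j])}g_w\,d\mathcal H^1\le\int_X g_w\,d\mathcal H^1$, hence $\pV(w,X)\le\int_X g_w\,d\mathcal H^1$ and in particular $\pV(u_i,X)\le\int_X g_{u_i}\,d\mathcal H^1$. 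To pass to the limit I would invoke a Helly-type selection principle (in the spirit of the proof of the lemma on existence of a good representative): since the $u_i$ are equibounded with uniformly bounded pointwise variation, a subsequence converges pointwise everywhere on $X$ to some $v$, and $v=u$ a.e. since $u_i\to u$ in $L^1$. Testing $\pV$ against a fixed finite family of curves and using pointwise convergence at the finitely many endpoints yields $\pV(v,X)\le\liminf_i\pV(u_i,X)\le\Vert Du\Vert(X)$, so $\Var(u,X)\le\pV(v,X)\le\Vert Du\Vert(X)$.

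For part (2), using the lemma on existence of a good representative I would fix $\widetilde u=u$ a.e. with $\pV(\widetilde u,X)=\Var(u,X)=:V<\infty$; recall $\widetilde u$ is bounded. The goal is to produce $u_i\to u$ in $L^1(X)$ whose upper gradients have uniformly bounded integrals, so that $\Vert Du\Vert(X)\le\liminf_i\int_X g_{u_i}\,d\mathcal H^1<\infty$. I would exhaust $X$ by an increasing sequence of finite embedded graphs $X_1\subset X_2\subset\cdots$ with $\mathcal H^1(X\setminus X_i)\to0$. Here condition \eqref{eq:blow up condition liminf} enters: covering $X$ by balls $B(x,r_x)$ with $\mathcal H^1(B(x,r_x))<C_0r_x$ and extracting finite subcovers at every scale shows that no small ball can contain more than roughly $C_0$ pairwise disjoint arc-pieces of comparable length, so, choosing the $X_i$ compatibly with such covers, the vertices of $X_i$ have valence bounded in terms of $C_0$. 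On the finite graph $X_i$ I would take $u_i$ to be an edgewise mollification of $\widetilde u$, modified near each branch point $p$ by a short linear-in-arclength ramp to the common value $\widetilde u(p)$ on each incident edge; the ramp on a given edge costs at most the oscillation of $\widetilde u$ near $p$, so bounded valence makes the total ramp cost $\lesssim C_0\sum_p\osc_p(\widetilde u)\lesssim C_0V$, while the edgewise mollification contributes at most the sum of the edgewise essential variations of $\widetilde u$ plus $o(1)$, and this sum is $\le\pV(\widetilde u,X)=V$ since essential variations along disjoint arcs add up to at most the pointwise variation. Finally I would extend $u_i$ from $X_i$ to $X$ by interpolating linearly in arclength along the small-diameter arcs of $X\setminus X_i$ between the boundary values already fixed on $X_i$; this keeps $u_i$ continuous on all of $X$, adds at most $\sum_{\text{components}}\osc(\widetilde u,\cdot)\le V+o(1)$ to the variation (disjointness again), and contributes at most $C\,\mathcal H^1(X\setminus X_i)\to0$ to the $L^1$-error. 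Thus $u_i\to u$ in $L^1(X)$ with $\int_X g_{u_i}\,d\mathcal H^1\lesssim(1+C_0)V$, which gives $\Vert Du\Vert(X)<\infty$.

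The main obstacle is in part (2). Condition \eqref{eq:blow up condition liminf} is only a \emph{liminf} along a sequence of radii that depends on the point, so turning it into a uniform bound on the valence and small-scale complexity of the approximating graphs $X_i$ — while simultaneously arranging $\mathcal H^1(X\setminus X_i)\to0$ — requires a careful covering/compactness argument and is the crux of the proof; this is also exactly where the geometry that the examples in the introduction are designed to destroy comes in. The secondary delicate point is the bookkeeping around branch points: one must verify that the modified $u_i$ is genuinely continuous on all of $X$ and that the ramp costs summed over all (possibly countably many) branch points really are controlled by $V$ rather than accumulating. In part (1) the only nonroutine ingredient is the Helly-type pointwise-compactness statement, whose proof runs in parallel to that of the good representative lemma.
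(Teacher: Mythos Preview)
Your argument for part~(1) is essentially sound, though the Helly step is shakier than you suggest: uniformly bounded pointwise variation on $X$ does not obviously yield a subsequence converging pointwise \emph{everywhere} (classical Helly relies on the order structure of $\R$), and the good-representative lemma does not contain such a compactness statement. Fortunately you do not need it: from $u_i\to u$ in $L^1$ you already have $u_i\to u$ on a set $D$ of full measure after passing to a subsequence, whence $\pV_D(u,X)\le\liminf_i\pV(u_i,X)$ by testing against curves with endpoints in $D$, and Proposition~\ref{prop:extension from a set of full measure} upgrades this to $\Var(u,X)\le\Vert Du\Vert(X)$. The paper itself proceeds a bit differently, setting $v:=\limsup_i u_i$ and using weak-$*$ compactness of the measures $g_i\,d\mathcal H^1$ together with upper semicontinuity on compact sets to bound $|v\circ\gamma(\ell)-v\circ\gamma(0)|$ by $\nu(\gamma([0,\ell]))$.

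Part~(2) is where your plan and the paper diverge, and where there is a real gap. Your scheme hinges on extending the approximant $u_i$ from the finite graph $X_i$ to all of $X$ while keeping the upper-gradient integral under control. But nothing forces the connected components of $X\setminus X_i$ to be arcs with one or two boundary points on $X_i$: even when $X_i$ is built from the first $i$ arcs of the Second Rectifiability decomposition, a component of the complement can be a subtree meeting $X_i$ in arbitrarily many points, so ``linear interpolation in arclength between boundary values'' is not well defined, and any reasonable extension forces you to pay the oscillation of $\widetilde u$ across each pair of boundary points. Summing those costs is exactly the uncontrolled bookkeeping you flag but do not resolve. Relatedly, bounding the valence of $X_i$ pointwise via \eqref{eq:blow up condition liminf} is easy (valence $k$ forces $\liminf\ge k$), but this does not bound the number of branch points of $X$ lying in $X\setminus X_i$ nor the number of boundary points of a component, so the ``small-scale complexity'' you need does not follow from the covering argument you sketch.

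The paper sidesteps all of this with a different reduction: instead of building approximants for a general $u$, it first modifies a good representative near its (countably many) jump points to obtain curve-continuous approximants $u_i$ with $\pV(u_i,X)\lesssim\Var(u,X)$ (this is where \eqref{eq:blow up condition liminf} is actually used, to control the cutoff functions' contribution). It then proves a coarea inequality $\int_{\R}^*\Var(\ch_{\{u>t\}},X)\,dt\le C_1\Var(u,X)$, so that via the BV coarea formula \eqref{eq:coarea} it suffices to show $\Vert D\ch_E\Vert(X)\le C_0\Var(\ch_E,X)$ for sets $E$. For a set with $\Var(\ch_E,X)<\infty$ the ``curve boundary'' is a \emph{finite} set $\{x_1,\ldots,x_N\}$ with $N\le\Var(\ch_E,X)$, and the approximants are simply $\ch_F$ capped by Lipschitz bumps at these finitely many points; the density condition is invoked once more to choose the bump radii. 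The finiteness of the curve boundary is the structural fact that replaces your graph-exhaustion and makes the construction explicit.
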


\begin{remark} In particular, if $X$ is complete, connected and Ahlfors $1$-regular
with $\mathcal H^1(X)<\infty$, a function $u$ on $X$ satisfies
$\Vert Du\Vert(X)<\infty$ if and only if $\Var(u,X)<\infty$.
\end{remark}

\begin{remark} The density upper bound \eqref{eq:blow up condition liminf} turns out to be essential in this characterization. Complete and connected metric spaces $(X,d)$ with $\mathcal{H}^1(X)<\infty$ can be constructed such that a function $u$
satisfies $\Vert Du\Vert(X)=\infty$ while $\Var(u,X)<\infty$, see Example \ref{ex:space with Poincare} and Example \ref{ex:space with doubling}.
\end{remark}

The proof for the first part of the main theorem is standard and is given in Proposition \ref{prop:if direction}. The second part requires a more delicate argument. Suppose $u$ is a function with $\Var(u,X)<\infty$.  
We first use the existence of good representatives
 to show that $\Var(v,X)$ is lower semicontinuous with respect to
 convergence in $L^1(X)$. Then 
we prove the coarea inequality stated below, first for
curve-continuous functions, i.e. functions that are continuous along every curve in $X$. A sequence of curve-continuous functions $u_i$ approximating $u$ in $L^1(X)$ can be constructed such that the limit superior of $\pV(u_i, X)$ is bounded above by $C_1\Var(u,X)$, where $C_1$ is a constant. These facts imply
the following result; $\ch_E$ denotes the characteristic function of
$E\subset X$.

\begin{lemma}[Co-area Inequality]
Let $(X,d,\mathcal H^1)$ be a complete and connected metric measure space
with $\mathcal{H}^1(X)<\infty$. Suppose there exists a constant $C_0$ such that for all $x\in X$
\[
\liminf_{r\to 0}\frac{\mathcal H^1(B(x,r))}{r}<C_0
\]
holds. Suppose $\Var(u,X)<\infty$. Then
\[
C_1\Var(u,X)\ge \int_{\R}^{*}\Var(\ch_{\{u>t\}},X)\,dt.
\]
\end{lemma}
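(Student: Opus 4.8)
The plan is to reduce the estimate to the case of curve-continuous functions (functions continuous along every curve in $X$), where it is essentially the classical one-dimensional coarea formula applied curve by curve, and then to reach a general $u$ by approximation, invoking the $L^1(X)$-lower semicontinuity of $\Var(\cdot,X)$ established above. Throughout I allow myself to pass freely between $\pV$ and its variant built from $\PV(v\circ\gamma_j,[0,\ell_j])$, which is comparable to it by Lemma~\ref{lem:PV and pV}; the comparability constants are absorbed into the final constant $C_1$.

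\textbf{Step 1: curve-continuous $v$.} Let $v$ be curve-continuous with $\pV(v,X)<\infty$ and fix a finite family $\Gamma=\{\gamma_1,\dots,\gamma_n\}$ of pairwise disjoint injective arc-length curves. Each $v\circ\gamma_j$ is continuous on $[0,\ell_j]$, so the one-dimensional coarea formula for continuous functions (equivalently Banach's indicatrix theorem) gives $\PV(v\circ\gamma_j,[0,\ell_j])=\int_{\R}\PV(\ch_{\{v\circ\gamma_j>t\}},[0,\ell_j])\,dt$; since $\ch_{\{v\circ\gamma_j>t\}}=\ch_{\{v>t\}}\circ\gamma_j$, summing over $j$ and applying Tonelli's theorem yields
\[
\sum_{j=1}^{n}\PV(v\circ\gamma_j,[0,\ell_j])=\int_{\R}n_\Gamma(t)\,dt,\qquad n_\Gamma(t):=\sum_{j=1}^{n}\PV(\ch_{\{v>t\}}\circ\gamma_j,[0,\ell_j]).
\]
Since $\pV(\ch_{\{v>t\}},X)=\sup_\Gamma n_\Gamma(t)$ for each fixed $t$, the inequality $\int^{*}_{\R}\pV(\ch_{\{v>t\}},X)\,dt\le\pV(v,X)$ amounts to pulling the supremum over collections out of the $t$-integral. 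This interchange is where the one-dimensional structure of $X$ — a complete connected space with $\mathcal H^1(X)<\infty$ is a compact rectifiable continuum, and for a.e.\ level $t$ only ``finitely much'' variation is concentrated near $\partial^*\{v>t\}$ — must be exploited, e.g.\ by realizing $\pV(\ch_{\{v>t\}},X)$ up to arbitrarily small error by collections drawn from a single countable family of curves whose relevant transition behaviour varies measurably in $t$. I expect this step to be the technical heart of the argument. Combined with $\Var(\ch_{\{v>t\}},X)\le\pV(\ch_{\{v>t\}},X)$, it produces a constant $C$ with $\int_{\R}^{*}\Var(\ch_{\{v>t\}},X)\,dt\le C\,\pV(v,X)$ for every curve-continuous $v$.

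\textbf{Step 2: approximation and conclusion.} For a general $u$ with $\Var(u,X)<\infty$, I would invoke the construction from the introduction — this is precisely the point where the density bound~\eqref{eq:blow up condition liminf} enters — of curve-continuous functions $u_i\to u$ in $L^1(X)$ with $\limsup_i\pV(u_i,X)\le C_1'\Var(u,X)$. The layer-cake identity $\|u_i-u\|_{L^1(X)}=\int_{\R}\|\ch_{\{u_i>t\}}-\ch_{\{u>t\}}\|_{L^1(X)}\,dt$ then forces, along a subsequence, $\ch_{\{u_i>t\}}\to\ch_{\{u>t\}}$ in $L^1(X)$ for a.e.\ $t\in\R$, whence $\Var(\ch_{\{u>t\}},X)\le\liminf_i\Var(\ch_{\{u_i>t\}},X)$ for a.e.\ such $t$ by the lower semicontinuity of $\Var(\cdot,X)$. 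Applying Step~1 to each $u_i$, and then monotonicity of the upper integral together with Fatou's lemma (valid for upper integrals as well), we obtain
\[
\int_{\R}^{*}\Var(\ch_{\{u>t\}},X)\,dt\le\liminf_i\int_{\R}^{*}\Var(\ch_{\{u_i>t\}},X)\,dt\le C\limsup_i\pV(u_i,X)\le C\,C_1'\,\Var(u,X),
\]
which is the claimed coarea inequality with $C_1:=C\,C_1'$.
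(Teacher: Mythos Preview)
Your Step~2 is essentially the paper's own argument: approximate by curve-continuous $u_i$ (this is Proposition~\ref{prop:approximation by continuous functions}, where the density bound enters), use the layer-cake identity to pass to a subsequence with $\ch_{\{u_i>t\}}\to\ch_{\{u>t\}}$ in $L^1(X)$ for a.e.\ $t$, then lower semicontinuity and Fatou.

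The genuine gap is in Step~1, and you have essentially flagged it yourself without closing it. From Banach's indicatrix on each curve you obtain
\[
\sup_{\Gamma}\int_{\R}n_\Gamma(t)\,dt\;\le\;2\,\pV(v,X),
\]
whereas what you need is
\[
\int_{\R}^{*}\sup_{\Gamma}n_\Gamma(t)\,dt\;\le\;C\,\pV(v,X).
\]
Pulling the supremum \emph{outside} the integral is the nontrivial direction, and nothing you have written justifies it. Your suggestion of ``a single countable family of curves whose relevant transition behaviour varies measurably in $t$'' does not obviously exist: for different levels $t$ the near-optimal disjoint collection $\Gamma$ may sit in entirely different branches of the continuum (picture a tree), so there is no monotone sequence $(\Gamma_k)$ of disjoint collections on which monotone convergence would rescue the interchange. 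Rectifiability alone does not produce such a family.

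The paper circumvents this interchange completely by a different device. For curve-continuous $v$ it sets $m(t):=\pV(v_t,X)$ with $v_t:=\min\{t,v\}$; since $m$ is nondecreasing, $\pV(v,X)\ge\int_{\R}m'(t)\,dt$. Two short lemmas then do the work: a truncation inequality $m(t+r)-m(t)\ge\pV(v_{t,t+r},X)$ (Lemma~\ref{lem:truncation lemma}, proved by using curve-continuity to push the endpoints of competing curves into $\{v<t\}$ or $\{v>t\}$ so that the two families become disjoint), and the pointwise limit $\liminf_{r\to0}r^{-1}\pV(v_{t,t+r},X)\ge\pV(\ch_{\{v>t\}},X)$ (Lemma~\ref{lem:level sets}). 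Together these give $m'(t)\ge\pV(\ch_{\{v>t\}},X)$ wherever $m$ is differentiable, and hence $\pV(v,X)\ge\int_{\R}^{*}\pV(\ch_{\{v>t\}},X)\,dt$ with constant $1$. The monotone-function trick is precisely the missing idea that replaces your unjustified sup--integral swap.
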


Using also the BV coarea formula \cite[Proposition 4.2]{Mir} (see detailed statement \eqref{eq:coarea} in Section \ref{sec:Prelis}), it now suffices to consider $u=\ch_E$ for $\Var(\ch_E, X)<\infty$. Hence the proof is completed by showing that $\Vert D\ch_E\Vert(X)$ is bounded above by $C_0\Var (\ch_E,X)$.

An interesting and important aspect of the theory of BV functions lies in the analysis of sets of finite perimeter, that is, sets whose characteristic functions are BV functions. For a set $E\subset \R^n$, Federer's characterization of sets of finite perimeter \cite{Fed} states that $E$ has finite perimeter if and only if the codimension one Hausdorff measure of its measure-theoretic boundary satisfies $\mathcal H(\partial^*E)<\infty$, see Section \ref{sec:Federer} for detailed definitions.  
Let $(X,d,\mu)$ be a complete and doubling metric measure space that supports a $1$-Poincar\'e inequality and let $E\subset X$ be a measurable set. Ambrosio \cite[Theorem 5.3]{A1} shows that if $E$ has finite perimeter then $\mathcal H(\partial^*E)<\infty$.
The converse implication of Federer's characterization in the general
metric space setting is proved by the first author in \cite[Theorem 1.1]{L-Fedchar}.

It has not been known so far whether the doubling and Poincar\'e inequality conditions on the underlying space are necessary when showing that the condition
$\mathcal{H}(\partial^* E)<\infty$ implies that $E$ is of finite perimeter.
By constructing simple explicit examples of one-dimensional spaces,
we show that these two conditions are really essential.

This paper is organized in the following way: preliminaries are covered in
Section \ref{sec:Prelis} and the proof of the main theorem is presented in
Section \ref{sec:Proofs}. In Section \ref{sec:Federer}, we construct two
examples to show the necessity of the doubling condition and the
Poincar\'e inequality in Federer's characterization.

\section{Definitions and notation}\label{sec:Prelis}

Assume throughout the paper that
$(X,d,\mathcal H^1)$ is a complete and connected metric space with
$\mathcal H^1(X)<\infty$.
If a property holds outside a set of $\mathcal H^1$-measure zero, we say that
it holds almost everywhere, abbreviated a.e.
The symbol $C$ will denote a constant that only depends on the space $X$.
We say that a measure $\mu$ is doubling if there exists a constant $C$
such that $\mu(B(x,2r))\le C\mu(B(x,r))$ for all open balls
$B(x,r)$.
The space $X$ is Ahlfors $s$-regular if there is a constant $C$ such that
\[
C^{-1}r^s\le\mu(B(x,r))\le Cr^s,
\]
whenever $x\in X$ and $0<r<{\rm diam}(X)$.
 If $X$ is Ahlfors $s$-regular with respect to $\mu$, we can replace $\mu$ by the $s$-dimensional Hausdorff measure $\mathcal{H}^s$ without losing essential information \cite[Exercise 8.11]{Hei1}.
 
 A continuous mapping $\gamma\colon [a,b]\to X$ is said to be a rectifiable curve if it has finite length. A rectifiable curve always admits an arc-length parametrization (see e.g. \cite[Theorem 3.2]{Haj}).
 If $\gamma\colon [a,b]\to X$ is a rectifiable curve and
 $g\colon \gamma([a,b])\to [0, \infty]$ is a Borel function, we define
\[
\int_{\gamma} g\,ds:=\int_0^{\ell}g(\widetilde\gamma(s))\,ds,
\]
where $\widetilde\gamma\colon [0, \ell]\to X$ is the arc-length parametrization of $\gamma$.
From now on we will assume all curves to be rectifiable and
arc-length parametrized unless
otherwise specified.

\begin{definition}[Upper gradient]\label{def:upper gradient}
Let $u\colon X\to \overline{\mathbb{R}}$. We say that a Borel
function $g\colon X\to [0,\infty]$ is an upper gradient of $u$ if
\begin{equation}\label{eq:upper gradient inequality}
|u(\gamma(\ell_{\gamma}))-u(\gamma(0))|\le \int_{\gamma}g\,ds
\end{equation}
for every curve $\gamma$.
We use the conventions $\infty-\infty=\infty$ and
$(-\infty)-(-\infty)=-\infty$.
If $g\colon X\to [0,\infty]$ is a $\mu$-measurable function
and (\ref{eq:upper gradient inequality}) holds for $1$-almost every curve,
we say that $g$ is a $1$-weak upper gradient of $u$.
A property is said to hold for $1$-almost every curve
if there exists $\rho\in L^1(X)$ such that $\int_{\gamma}\rho\,ds=\infty$
for every curve $\gamma$ for which the property fails.
\end{definition}

For $1\le p<\infty$, the Newtonian Sobolev class
$N^{1,p}(X)$ consists of those $L^p$-integrable functions
on $X$ for which there exists a $p$-integrable 
upper gradient.

The notation $u_B$ stands for an integral average, that is, 
\[
u_B:=\fint_B u\,d\mu:=\frac{1}{\mu(B)}\int_B u\  d\mu.
\]
A metric measure space supporting a Poincar\' e inequality
is defined in the following way.
\begin{definition}[Space supporting Poincar\'e inequality]\label{def:poincare}
Let $1\le p<\infty$. A metric measure space $(X, d, \mu)$ is said to support a p-Poincar\' e  inequality if there exists constants $C>0$ and $ \lambda\ge1$ such that the following holds for every pair of functions
$u\colon X\to \overline{\mathbb{R}}$ and
$g\colon X\to [0,\infty]$, where $u$ is measurable
and $g$ is an upper gradient of $u$:
\[
\fint_{B(x,r)} |u-u_{B(x,r)}|\,d\mu\le Cr\left(\fint_{B(x,\lambda r)}g^p\, d\mu\right)^{\frac{1}{p}}
\]
for every ball $B(x,r)$.
\end{definition}

A metric space $X$ is quasiconvex if every two points can be joined by a curve with length comparable to the distance between these two points. If $X$ is complete, doubling and supports a $p$-Poincar\'e inequality for
$1\le p<\infty$, then $X$ is quasiconvex \cite[Proposition 4.4]{HK1}.

We recall the following generalization of the Euclidean area formula to the case of Lipschitz maps $f$ from the Euclidean space $\mathbb{R}^n$ into a metric space $X$. The proof can be found in \cite[Corollary 8]{Kb}.

\begin{theorem}[Area formula]\label{Area}
Let $f\colon \mathbb{R}^n\to X$ be Lipschitz. Then
\[
\int_{\mathbb{R}^n}g(x)J_n(mdf_x)\ dx=\int_X\sum_{x\in f^{-1}(y)}g(x)\, d\mathcal{H}^n(y)
\]
for any Borel function $g\colon \mathbb{R}^n\to [0,\infty]$, and 
\[
\int_Ag(f(x))J_n(mdf_x)\ dx=\int_Xg(y)\mathcal{H}^0(A\cap f^{-1}(y))\, d\mathcal{H}^n(y)
\]
for $A\subset\mathbb{R}^n$ measurable and any Borel function
$g\colon X\to [0,\infty]$.
\end{theorem}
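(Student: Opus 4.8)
The plan is to reproduce, in outline, Kirchheim's argument, whose linchpin is the \emph{metric differential}. \emph{Step 1 (metric differentiability).} One first shows that the Lipschitz map $f\colon\mathbb R^n\to X$ is metrically differentiable at $\mathcal L^n$-a.e.\ $x$: there is a seminorm $mdf_x$ on $\mathbb R^n$ with
\[
\aplim_{v\to 0}\frac{d(f(x+v),f(x))-mdf_x(v)}{|v|}=0 .
\]
This is obtained by a countable reduction to Rademacher's theorem: choose $\{z_i\}$ dense in $f(\mathbb R^n)$, note that each $\varphi_i:=d(f(\cdot),z_i)$ is real-valued and Lipschitz, hence differentiable a.e., and check that at common Lebesgue points the functions $v\mapsto|\nabla\varphi_i(x)\cdot v|$ have a supremum that is a seminorm and coincides with the metric derivative in every direction (the lower bound is immediate; the matching upper bound comes from a density/near-maximizer argument). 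By definition $J_n(mdf_x):=\omega_n/\mathcal L^n(\{v:mdf_x(v)\le 1\})$ when $mdf_x$ is a norm, and $J_n(mdf_x):=0$ when $mdf_x$ is degenerate.

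\emph{Step 2 (reduction and the null set).} By linearity and monotone convergence it suffices to prove, for every measurable $A\subseteq\mathbb R^n$,
\[
\int_A J_n(mdf_x)\,dx=\int_X\mathcal H^0(A\cap f^{-1}(y))\,d\mathcal H^n(y);
\]
the first displayed identity of the theorem is the case $g=\chi_A$ together with a passage to general Borel $g$ through simple functions, and the second then follows by decomposing a Borel $g\colon X\to[0,\infty]$ into simple functions and using $g(f(x))=\sum_j c_j\,\chi_{f^{-1}(B_j)}(x)$. Split $\mathbb R^n=Z\cup(\mathbb R^n\setminus Z)$ with $Z:=\{x:mdf_x\text{ fails to exist or }J_n(mdf_x)=0\}$. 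On $Z$ the left side is $0$, and $\mathcal H^n(f(Z))=0$: near a point of $Z$ the image of a small cube is squeezed into an essentially $(n-1)$-dimensional slab, so a Vitali-type covering of $Z$ yields coverings of $f(Z)$ whose $\mathcal H^n$-"volumes" sum to an arbitrarily small number. Hence $\mathcal H^0(A\cap f^{-1}(y))$ is supported on the $\mathcal H^n$-null set $f(Z)$ and the right side also vanishes over $Z$.

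\emph{Step 3 (the good set).} On $\mathbb R^n\setminus Z$ one performs a Lusin–Egorov decomposition. Fixing a countable dense family of norms and $\varepsilon>0$, the sets on which $mdf_x$ is $\varepsilon$-close to one of these norms and the error in metric differentiability is uniformly controlled at a common small scale cover $\mathbb R^n\setminus Z$; refining to a countable Borel partition $\{E_k\}$, one arranges that each $f|_{E_k}$ is bi-Lipschitz from $(E_k,d_{s_k})$, $d_{s_k}(x,y):=s_k(x-y)$, onto its image, with bi-Lipschitz constant $L_k$ arbitrarily close to $1$. For $A\subseteq E_k$: injectivity of $f|_{E_k}$ gives $\mathcal H^0(A\cap f^{-1}(y))=\chi_{f(A)}(y)$, so the right side over such $A$ equals $\mathcal H^n(f(A))$; the bi-Lipschitz property bounds $\mathcal H^n(f(A))$ above and below by $L_k^{\pm n}$ times the Hausdorff measure of $A$ computed in the normed space $(\mathbb R^n,s_k)$, which by the elementary identity $\mathcal H^n_{s}=(\omega_n/\mathcal L^n(\{s\le 1\}))\,\mathcal L^n$ equals $J_n(s_k)\,\mathcal L^n(A)$; and on $E_k$ the integrand $J_n(mdf_x)$ lies within $\varepsilon_k$ of $J_n(s_k)$. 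Summing over the disjointified pieces and letting the partition refine (so $L_k\to1$ and $\varepsilon_k\to0$) yields the identity on $\mathbb R^n\setminus Z$, and adding the $Z$-contribution finishes the proof.

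I expect the genuine obstacles to be (i) establishing metric differentiability a.e.\ with enough uniformity to run the Egorov step, and (ii) the bi-Lipschitz-on-pieces reduction, where keeping the constants $L_k$ near $1$ really needs the approximate-limit form of metric differentiability and not merely the pointwise limit. The null-set estimate $\mathcal H^n(f(Z))=0$, the computation of $\mathcal H^n$ on a normed space, and the passage between the two displayed formulas are routine once these are in place.
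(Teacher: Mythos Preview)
The paper does not prove this theorem at all: it is stated as a known result and the proof is simply referred to \cite[Corollary~8]{Kb} (Kirchheim). So there is no ``paper's own proof'' to compare against; the statement functions as background, and only its one-dimensional consequence \eqref{eq:area formula 1d} is used.

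Your outline is a faithful sketch of Kirchheim's original argument --- metric differentiability via Rademacher applied to the functions $d(f(\cdot),z_i)$, the degenerate set $Z$ with $\mathcal H^n(f(Z))=0$, and the Lusin--Egorov decomposition into pieces on which $f$ is bi-Lipschitz onto a normed $\mathbb R^n$ with constant close to $1$. As a summary of the cited proof it is accurate, and you have correctly identified the two genuinely nontrivial ingredients (a.e.\ metric differentiability with enough local uniformity, and the partition into near-isometric pieces). For the purposes of this paper, however, a one-line citation suffices.
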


We apply the above theorem to an arc-length parametrized curve. Let
$f=\gamma$ and $\gamma\colon [0,\ell]\to X$. In this case, $J_1(mdf_x)$
equals the metric derivative defined as
\[
|\dot \gamma|(t):=\lim_{h\to 0}\frac{d(\gamma(t+h), \gamma(t))}{|h|},
\] 
and $|\dot \gamma|(t)=1$ for almost every $t\in [0,\ell]$.
Let $\Gamma=\gamma([0,\ell])$ and let $g\colon X\to [0, \infty]$ be a Borel function. It follows from Theorem \ref{Area} that
\begin{equation}\label{eq:area formula 1d}
\int_0^\ell g(\gamma(s))\, ds=\int_{\Gamma} g(y)\mathcal{H}^0([0, \ell]\cap \gamma^{-1}(y))\, d\mathcal{H}^1(y).
\end{equation}

A compact and connected 1-dimensional metric space admits a nice parametrization. The proofs of the following two classical results can be found in \cite[Theorem 4.4.7, Theorem 4.4.8]{AmTi}.

\begin{theorem}[First Rectifiability Theorem]\label{2.5.1}
If $E$ is complete and $C\subset E$ is a closed connected set such that $\mathcal{H}^1(C)<\infty$, then $C$ is compact and connected by simple curves.
\end{theorem}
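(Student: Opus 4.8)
The plan is to prove the two assertions — compactness of $C$, and joinability of any two of its points by a simple rectifiable curve inside $C$ — separately. The first is elementary packing; the second rests on classical facts from continuum theory, which I would quote rather than reprove.

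\textbf{Compactness.} Since $C$ is closed in the complete space $E$, it is itself complete, so it suffices to show $C$ is totally bounded. The basic estimate is: if $x\in C$, $r>0$ and $C\not\subseteq\overline B(x,r)$, then $\mathcal H^1\big(C\cap\overline B(x,r)\big)\ge r$. Indeed, $z\mapsto d(x,z)$ is $1$-Lipschitz on $C$; choosing $y\in C$ with $d(x,y)>r$ and using connectedness of $C$, the image of $C$ under this map is an interval containing $[0,r]$, and every value of $[0,r]$ is already attained on $C\cap\overline B(x,r)$, whence $\mathcal H^1(C\cap\overline B(x,r))\ge\mathcal H^1([0,r])=r$. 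Applied with $r<d(x,y)$ for a pair $x,y\in C$ with $d(x,y)$ as close to $\diam C$ as we wish, this already gives $\diam C\le\mathcal H^1(C)<\infty$. Now I would fix a small $\eps>0$, pick a maximal $\eps$-separated set $\{x_1,\dots,x_N\}\subseteq C$, and note that the sets $C\cap\overline B(x_i,\eps/4)$ are pairwise disjoint (two points at distance $\le\eps/2<\eps$ cannot both be $\eps$-separated centers) and each has $\mathcal H^1$-measure at least $\eps/4$ by the basic estimate; hence $N\le 4\,\mathcal H^1(C)/\eps$. So $C$ is totally bounded, and being complete it is compact.

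\textbf{Joinability by simple curves.} The same packing estimate, localized inside small balls, rules out the accumulation near any point of infinitely many ``macroscopically large'' components of $C$; letting the scale tend to $0$ this yields that $C$ is locally connected, so $C$ is a Peano continuum. I would then invoke the classical theorem that every Peano continuum is arcwise connected, which gives, for any two distinct $x,y\in C$, an arc $\Gamma\subseteq C$, i.e.\ a homeomorphic image of $[0,1]$ with endpoints $x,y$. (An alternative route, closer to how I would actually set things up, is to choose among all compact connected subsets of $C$ containing $x$ and $y$ one of least $\mathcal H^1$-measure — existence follows from compactness of the hyperspace of closed subsets of the compact set $C$ under the Hausdorff metric together with the lower semicontinuity of $\mathcal H^1$ along Hausdorff-convergent sequences of continua, a theorem of Go\l{}ab — and then to check directly that such a minimizer has no branch points and no nontrivial loops, hence is a simple arc.) Finally, $\Gamma$ is a simple arc with $\mathcal H^1(\Gamma)\le\mathcal H^1(C)<\infty$; since for an injective curve the length coincides with the $\mathcal H^1$-measure of its image, $\Gamma$ is rectifiable, and its arc-length reparametrization is an injective, $1$-Lipschitz curve in $C$ joining $x$ to $y$.

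\textbf{Main obstacle.} Everything here is elementary packing or a length–measure identification, \emph{except} the purely topological upgrade from local connectedness (which itself comes from finiteness of $\mathcal H^1$) to arcwise connectedness. That is the heart of the matter and is classical, which is why the statement is quoted from \cite{AmTi} rather than reproved. If a self-contained argument were wanted, I would take the $\mathcal H^1$-minimal connected set route above; there the crux is showing that the minimizer cannot contain a point meeting three ``independent'' subcontinua, which one rules out by excising a small arc near such a point and reconnecting more cheaply, contradicting minimality.
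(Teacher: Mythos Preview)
The paper does not prove this theorem; it simply quotes it from \cite[Theorem~4.4.7]{AmTi} and uses it as a black box. So there is no ``paper's own proof'' to compare against.

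That said, your sketch follows the standard route taken in \cite{AmTi}. The compactness argument is complete and correct as written: the lower density bound $\mathcal H^1(C\cap\overline B(x,r))\ge r$ via the $1$-Lipschitz map $z\mapsto d(x,z)$ and connectedness is exactly the key lemma in Ambrosio--Tilli, and your packing argument for total boundedness is clean. For the second part, both of the approaches you outline are viable and classical. The local-connectedness step is the one place where your writeup is genuinely sketchy: to turn ``infinitely many macroscopically large components accumulate'' into a contradiction you need a boundary-bumping lemma (each component of $C\cap\overline B(x,r)$ that meets $B(x,r/2)$ must meet $\partial B(x,r)$, hence has diameter $\ge r/2$ and thus $\mathcal H^1$-measure $\ge r/2$), which is standard continuum theory but not quite immediate from the density estimate alone. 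Your alternative via Go\l{}\k{a}b's semicontinuity theorem and an $\mathcal H^1$-minimal connected set is in fact closer to how \cite{AmTi} organizes the argument, and is arguably the more self-contained choice. Either way, you have correctly identified that the topological upgrade to arcwise connectedness is the only nontrivial ingredient and that it is classical.
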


\begin{theorem}[Second Rectifiability Theorem]\label{2.5.2}
If $E$ is complete, $C\subset E$ is closed and connected, and $\mathcal{H}^1(C)<\infty$, then there exist countably many
arc-length parametrized simple curves $\gamma_i\colon [0,\ell_i]\to C$ such that
\[
\mathcal{H}^1\Big(C\setminus \bigcup_{i=1}^{\infty}\gamma_i([0,\ell_i])\Big)=0.
\]
\end{theorem}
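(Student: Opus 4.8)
The plan is to exhaust $C$, up to an $\mathcal H^1$-null set, by an increasing sequence of finite unions of simple arcs, at each stage appending an arc that reaches into the portion of $C$ currently farthest from what has already been built, and then to close the argument with a length-semicontinuity theorem. If $\mathcal H^1(C)=0$, then $C$ is a single point (a connected set containing two points at distance $d$ has $\mathcal H^1\ge d$, via the $1$-Lipschitz map $z\mapsto d(z,x)$), so there is nothing to prove; assume henceforth $\mathcal H^1(C)>0$. By the First Rectifiability Theorem (Theorem~\ref{2.5.1}, applied with $E=X$) the set $C$ is compact, and any two of its points are joined by a simple rectifiable arc contained in $C$. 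I also record that such an arc has $\mathcal H^1$-measure equal to its length, which follows from the area formula \eqref{eq:area formula 1d} with $g\equiv 1$ together with injectivity.

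I would then run the construction inductively. Pick two distinct points of $C$ and let $\gamma_1$ be a simple arc in $C$ joining them, with image $\Gamma_1=:K_1$. Given a compact connected set $K_k\subset C$ that is a finite union of simple arcs, stop if $K_k=C$; otherwise $r_k:=\sup_{y\in C}d(y,K_k)>0$ is attained at some $x_{k+1}\in C$, and I join $x_{k+1}$ to a point of $K_k$ by a simple arc $\sigma$ in $C$, truncate $\sigma$ at its first hitting time $t_0>0$ of $K_k$, and reparametrize $\sigma|_{[0,t_0]}$ by arc length to obtain a simple arc $\gamma_{k+1}\colon[0,\ell_{k+1}]\to C$ whose image $\Gamma_{k+1}$ satisfies $\Gamma_{k+1}\cap K_k=\{\gamma_{k+1}(\ell_{k+1})\}$ and $\ell_{k+1}\ge d(x_{k+1},K_k)=r_k$; then I set $K_{k+1}:=K_k\cup\Gamma_{k+1}$, again a compact connected finite union of simple arcs. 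With the convention $K_0:=\emptyset$, the sets $A_k:=\Gamma_{k+1}\setminus K_k$ are pairwise disjoint: for $j<k$ one has $A_j\subset\Gamma_{j+1}\subset K_{j+1}\subset K_k$, which is disjoint from $A_k$. Moreover $\mathcal H^1(A_k)=\mathcal H^1(\Gamma_{k+1})=\ell_{k+1}$, since $\mathcal H^1(\Gamma_{k+1}\cap K_k)=0$ (it is at most one point). Hence $\sum_{m\ge1}\ell_m=\sum_{k\ge0}\mathcal H^1(A_k)\le\mathcal H^1(C)<\infty$, so $\ell_m\to0$ and therefore $r_k\le\ell_{k+1}\to0$; since $K_k\subset C$, this says $K_k\to C$ in the Hausdorff distance.

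To finish, put $K_\infty:=\bigcup_k\Gamma_k=\bigcup_kK_k$. By Go\l\k{a}b's semicontinuity theorem — lower semicontinuity of $\mathcal H^1$ along Hausdorff-convergent sequences of continua — one obtains $\mathcal H^1(C)\le\liminf_k\mathcal H^1(K_k)\le\mathcal H^1(K_\infty)\le\mathcal H^1(C)$, whence $\mathcal H^1(K_\infty)=\mathcal H^1(C)$, and since $K_\infty\subset C$ has finite measure this forces $\mathcal H^1\bigl(C\setminus\bigcup_k\Gamma_k\bigr)=0$; if the process terminated after finitely many steps then $K_\infty=C$, and a finite family is in particular countable. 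The two points requiring care are the arc surgery at each stage — truncating so that each new arc meets the previous union in exactly one point is precisely what makes the $A_k$ pairwise disjoint, hence what makes $\sum_m\ell_m$ finite — and the appeal to Go\l\k{a}b's theorem, which I expect to be the main (and essentially the only genuinely non-elementary) obstacle; the remainder is elementary topology of the compact connected set $C$ together with bookkeeping via the area formula.
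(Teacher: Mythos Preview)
Your argument is correct and is essentially the classical proof: the paper does not prove this theorem but simply cites \cite[Theorem 4.4.8]{AmTi}, and the proof there proceeds exactly as you do---iteratively appending a simple arc reaching to a point of $C$ farthest from the current union, using the disjointness of the truncated arcs to bound $\sum_m\ell_m\le\mathcal H^1(C)$, and then invoking Go\l\k{a}b's semicontinuity theorem to close the gap. The only cosmetic point is that in the parenthetical ``applied with $E=X$'' you have swapped the paper's notation (in Theorems~\ref{2.5.1}--\ref{2.5.2} the ambient complete space is called $E$, not $X$), but this has no bearing on the argument.
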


Given $u\in\liploc(X)$, we define the local Lipschitz constant by
\begin{equation}\label{eq:pointwise Lipschitz constant}
\Lip u(x):=\limsup_{y\to x}\frac{|u(y)-u(x)|}{d(y,x)}.
\end{equation}

Given an open set $\Om\subset X$ and a function $u\in L^1_{\loc}(\Om)$,
we define the total variation of $u$ in $\Om$ by
\[
\|Du\|(\Om):=\inf\left\{\liminf_{i\to\infty}\int_\Om g_{u_i}\,d\mu:\, u_i\in N^{1,1}_{\loc}(\Om),\, u_i\to u\textrm{ in } L^1_{\loc}(\Om)\right\},
\]
where each $g_{i}$ is a ($1$-weak) upper gradient of $u_i$ in $\Om$.
We say that a function $u\in L^1(\Om)$ is of bounded variation, 
and denote $u\in\BV(\Om)$, if $\|Du\|(\Om)<\infty$.
A $\mu$-measurable set $E\subset X$ is said to be of finite perimeter if $\|D\ch_E\|(X)<\infty$, where $\ch_E$ is the characteristic function of $E$.

The following coarea formula is given in \cite[Proposition 4.2]{Mir}:
if $\Omega\subset X$ is an open set and $u\in L^1_{\loc}(\Omega)$, then
\begin{equation}\label{eq:coarea}
\|Du\|(\Omega)=\int_{\R}^{*}\Vert D\ch_{\{u>t\}}\Vert(\Omega)\,dt,
\end{equation}
where we abbreviate $\{u>t\}:=\{x\in \Om:\,u(x)>t\}$.
We use an upper integral since measurability is not clear, but if either side
is finite, then both sides are finite and we also have measurability.

\section{Proofs of the main results}\label{sec:Proofs}

\textbf{Standing assumptions:}
We will assume throughout this section that
$(X,d,\mathcal H^1)$ is a  complete and connected metric measure space with
$0<\mathcal{H}^1(X)<\infty$. By the First Rectifiability Theorem
\ref{2.5.1}, it follows that $X$ is compact.

\subsection{Finite total variation implies finite pointwise variation}
We prove part (1) of Theorem \ref{thm:equivalence} first.

\begin{proposition}\label{prop:if direction}
Let $u$ be a function on $X$ such that $\Vert Du\Vert(X)<\infty$.
Then $\Var(u,X)\le \Vert Du\Vert(X)$.
\end{proposition}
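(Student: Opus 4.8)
The plan is to show that for any function $u$ with $\|Du\|(X) < \infty$, and for any competitor $u_i \in N^{1,1}_{\loc}(X)$ with $u_i \to u$ in $L^1(X)$ together with upper gradients $g_{u_i}$, one can control $\pV$ of a suitable representative of $u$ by $\liminf_i \int_X g_{u_i}\, d\mathcal{H}^1$; taking the infimum over such sequences then yields $\Var(u,X) \le \|Du\|(X)$. The natural route is: first establish the bound $\pV(u_i, X) \le \int_X g_{u_i}\, d\mathcal{H}^1$ for each Newtonian function $u_i$ individually, then pass to the limit.

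For the first step, fix $i$ and let $\gamma_1, \dots, \gamma_m$ be any finite collection of pairwise disjoint injective arc-length parametrized curves $\gamma_j \colon [0, \ell_j] \to X$. For each $j$, since $g_{u_i}$ is a $1$-weak upper gradient, the upper gradient inequality \eqref{eq:upper gradient inequality} holds along $\gamma_j$ provided $\gamma_j$ is not in the exceptional curve family; but I would instead argue that one may choose the representative of $u_i$ so that the upper gradient inequality holds along \emph{every} curve in a family covering $\mathcal{H}^1$-almost all of $X$, or appeal to the fact that Newtonian functions satisfy the upper gradient inequality on $p$-a.e.\ curve and that the subcurves of the $\gamma_j$ can be handled by Fuglede's lemma. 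Summing $|u_i \circ \gamma_j(\ell_j) - u_i \circ \gamma_j(0)| \le \int_{\gamma_j} g_{u_i}\, ds$ over $j$ and using that the curves are pairwise disjoint, together with the area formula \eqref{eq:area formula 1d} (which gives $\int_{\gamma_j} g_{u_i}\, ds = \int_{\Gamma_j} g_{u_i}\, d\mathcal{H}^1$ since the $\gamma_j$ are injective, so $\mathcal{H}^0([0,\ell_j]\cap \gamma_j^{-1}(y)) = 1$ on $\Gamma_j$), we obtain
\[
\sum_j |u_i\circ\gamma_j(\ell_j) - u_i\circ\gamma_j(0)| \le \sum_j \int_{\Gamma_j} g_{u_i}\, d\mathcal{H}^1 \le \int_X g_{u_i}\, d\mathcal{H}^1,
\]
the last inequality because the $\Gamma_j$ are disjoint. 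Taking the supremum over all such collections gives $\pV(u_i, X) \le \int_X g_{u_i}\, d\mathcal{H}^1$.

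For the second step, I would use the lower semicontinuity of $\Var$ with respect to $L^1(X)$ convergence. Since each $u_i \in N^{1,1}(X)$ has $\pV(u_i, X) \le \int_X g_{u_i}\, d\mathcal{H}^1 < \infty$, and since $\Var(u_i, X) \le \pV(u_i, X)$ by definition (taking $v = u_i$ as a competitor), we get $\Var(u_i, X) \le \int_X g_{u_i}\, d\mathcal{H}^1$. Choosing the sequence $(u_i)$ so that $\liminf_i \int_X g_{u_i}\, d\mathcal{H}^1$ is within $\eps$ of $\|Du\|(X)$, passing to a subsequence realizing the $\liminf$, and applying lower semicontinuity of $\Var$ under the $L^1$ convergence $u_i \to u$ yields $\Var(u, X) \le \liminf_i \Var(u_i, X) \le \|Du\|(X) + \eps$; letting $\eps \to 0$ finishes the argument.

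The main obstacle I anticipate is the careful handling of the upper gradient inequality along the specific finite families of curves appearing in the definition of $\pV$: a $1$-weak upper gradient only controls $p$-a.e.\ curve, and one must ensure the chosen finitely many $\gamma_j$ (and their subcurves) lie outside the exceptional family, or else pass to a genuine (non-weak) upper gradient representative of $u_i$. One clean way around this is to observe that for a \emph{fixed} finite collection of curves the exceptional family can be avoided by reparametrizing or by the density of good curves, or alternatively to use that for Newtonian functions one may take $g_{u_i}$ to be a genuine upper gradient up to an arbitrarily small increase in the $L^1$ norm. A secondary point requiring care is the lower semicontinuity of $\Var$ under $L^1$ convergence, which the paper attributes to the existence of good representatives; I would cite that fact as already available. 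Everything else is routine.
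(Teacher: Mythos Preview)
Your proposal is correct, but the paper takes a different route. You bound $\pV(u_i,X)\le \int_X g_{u_i}\,d\mathcal H^1$ for each approximant and then invoke the lower semicontinuity of $\Var$ under $L^1$ convergence (the paper's Proposition~\ref{prop:lower semicontinuity}) to pass to the limit. The paper instead avoids forward-referencing that result: it passes to a subsequence with $u_i\to u$ a.e., extracts a weak-$*$ limit $\nu$ of the measures $g_i\,d\mathcal H^1$, and defines the pointwise representative $v(x):=\limsup_i u_i(x)$ directly. Using $\limsup$ on both endpoints of any simple curve $\gamma$ together with the upper semicontinuity of $\nu$ on compacts gives $|v\circ\gamma(\ell)-v\circ\gamma(0)|\le \nu(\gamma([0,\ell]))$, and summing over disjoint curves yields $\pV(v,X)\le \nu(X)\le \Vert Du\Vert(X)$.

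What each approach buys: the paper's argument is self-contained at this point in the exposition and does not rely on the good-representative machinery (Propositions~\ref{prop:extension from a set of full measure}--\ref{prop:lower semicontinuity}), which is developed afterward for the converse direction. Your approach is conceptually cleaner---it separates the ``pointwise'' estimate for each $u_i$ from the limiting step---but it presupposes lower semicontinuity of $\Var$, so you would need to reorder the paper or at least note that Proposition~\ref{prop:lower semicontinuity} is logically independent of Proposition~\ref{prop:if direction} (which it is). Your handling of the weak-versus-genuine upper gradient issue is fine; the paper simply takes each $g_i$ to be a genuine upper gradient from the outset, which resolves it in one line.
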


\begin{proof}
From the definition of the total variation we find a sequence $(u_i)$
such that
$u_i\to u$ in $L^1(X)$ and
\begin{equation}\label{eq:choice of ui}
\lim_{i\to \infty}\int_X g_{i}\,d\mathcal H^1=\Vert Du\Vert(X),
\end{equation}
where each $g_i$ is an upper gradient of $u_i$.
Passing to a subsequence (not relabeled), we also have
$u_i\to u$ a.e.
By the First Rectifiability Theorem \ref{2.5.1},
for every pair of points $x,y\in X$ we find a simple curve
$\gamma\colon [0,\ell]\to X$ with $\gamma(0)=x$ and $\gamma(\ell)=y$,
and then by \eqref{eq:area formula 1d},
\[
|u_i(y)-u_i(x)|\le \int_{\gamma}g_{i}\,ds
\le \int_{X}g_{i}\,d\mathcal H^1\to \Vert Du\Vert(X)\quad\textrm{as }i\to\infty.
\]
Thus  the functions $u_i$ are uniformly bounded.
Note that the sequence of Radon measures $g_{i}\,d\mathcal H^1$
has uniformly bounded mass, and so we know that passing to a subsequence
(not relabeled) we have
$g_{i}\,d\mathcal H^1\overset{*}{\rightharpoonup}d\nu$ for some Radon measure
$\nu$ on $X$
\cite[Theorem 1.59]{AFP}. This reference also gives the lower semicontinuity
\begin{equation}\label{eq:nu is less than Du}
\nu(X)\le \lim_{i\to \infty}\int_X g_{u_i}\,d\mathcal H^1=\Vert Du\Vert(X).
\end{equation}
Moreover, for any compact set $K\subset X$ we have
\begin{equation}\label{eq:upper semicontinuity in closed sets}
\nu(K)\ge \limsup_{i\to \infty}\int_K g_{i}\,d\mathcal H^1;
\end{equation}
see \cite[Proposition 1.62]{AFP}
(and then in fact equality holds in
\eqref{eq:nu is less than Du}). 
Define $v(x):=\limsup_{i\to\infty}u_i(x)$ for every $x\in X$, so that $v=u$ $\mathcal H^1$-a.e., and $v$ is bounded since the functions $u_i$ are uniformly bounded.
Now for every simple curve $\gamma\colon [0,\ell]\to X$ we have
\begin{align*}
|v\circ\gamma(\ell)-v\circ\gamma(0)|
&\le \limsup_{i\to\infty}|u_i\circ\gamma(\ell)-u_i\circ\gamma(0)|\\
&\le \limsup_{i\to\infty}\int_{\gamma}g_{i}\,ds\\
&= \limsup_{i\to\infty}\int_{\gamma([0,\ell])}g_{i}\,d\mathcal H^1\quad\textrm{by }\eqref{eq:area formula 1d}\\
&\le \nu(\gamma([0,\ell]))\quad\textrm{by }\eqref{eq:upper semicontinuity in closed sets}.
\end{align*}
It follows that for any
finite collection of pairwise
disjoint simple curves $\gamma_j\colon [0,\ell_j]\to X$,
\[
\sum_j |v\circ\gamma_j(\ell_{j})-v\circ\gamma_j(0)|
\le \sum_j \nu(\gamma_j([0,\ell_j]))
\le \nu(X)\le \Vert Du\Vert(X)\quad\textrm{by }\eqref{eq:nu is less than Du}.
\]
It follows
that $\pV(v,X)\le \Vert Du\Vert(X)$ and so $\Var(u,X)\le \Vert Du\Vert(X)$.
\end{proof}

\subsection{
Finite pointwise variation
implies finite total variation}

The proof of part (2) of Theorem \ref{thm:equivalence} is more involved. We divide the argument into several parts. 

\subsubsection{Existence of a good representative}

We first show that every $u$ with $\Var(u,X)<\infty$ admits a good representative $\widetilde{u}$. 
As a result, $\Var(u,X)$ turns out to be
lower semicontinuous with respect to convergence in $L^1(X)$.

Note that we can define an alternative version of the pointwise
variation of a function $v$ on $X$ by
\[
\PV(v,X):=\sup\left\{\sum_j \PV(v\circ\gamma_j)\right\},
\]
where the supremum is taken over finite collections of pairwise
disjoint simple curves $\gamma_j\colon [0,\ell_j]\to X$, and we denote
$\PV(v\circ\gamma_j):=\PV(v\circ\gamma_j,[0,\ell_{j}])$; recall
\eqref{eq:pointwise variation classical}.
Then obviously $\pV(v,X)\le \PV(v,X)$. Conversely, we have the following.

\begin{lemma}\label{lem:PV and pV}
For any function $v$ on $X$, we have $\PV(v,X)\le 2\pV(v,X)$.
\end{lemma}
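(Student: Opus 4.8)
The plan is to compare, on each individual simple curve $\gamma_j\colon[0,\ell_j]\to X$, the pointwise variation $\PV(v\circ\gamma_j,[0,\ell_j])$ with a sum of increments of the form $|v\circ\sigma(\ell_\sigma)-v\circ\sigma(0)|$ taken over pairwise disjoint subcurves $\sigma$ of $\gamma_j$, and then to reassemble these subcurves across all $j$ into one admissible competitor for $\pV(v,X)$. First I would fix a finite collection of pairwise disjoint simple curves $\gamma_1,\dots,\gamma_N$ and, for each $j$, a partition $0=t_0^j\le t_1^j\le\cdots\le t_{m_j}^j=\ell_j$ realizing $\PV(v\circ\gamma_j)$ up to an arbitrary $\eps>0$; since the partitions are finite, it suffices to bound $\sum_j\sum_{k}|v\circ\gamma_j(t_k^j)-v\circ\gamma_j(t_{k-1}^j)|$.

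The key combinatorial step is the standard "even/odd" splitting: on $[0,\ell_j]$ the consecutive subintervals $[t_{k-1}^j,t_k^j]$ alternate, so grouping the odd-indexed ones together and the even-indexed ones together gives two subfamilies, each consisting of pairwise disjoint (up to shared endpoints) closed subintervals. Restricting $\gamma_j$ to each such subinterval produces a simple subcurve of $\gamma_j$, and within each of the two subfamilies these subcurves are pairwise disjoint except possibly at finitely many endpoints. To obtain genuinely pairwise disjoint curves as required by Definition \ref{BV 1d}, I would shrink each subinterval slightly, i.e. replace $[t_{k-1}^j,t_k^j]$ by $[t_{k-1}^j+\delta,t_k^j-\delta]$ for small $\delta>0$; there is a subtlety here, namely that $v$ need not be continuous along $\gamma_j$, so shrinking can change the increment $|v\circ\gamma_j(t_k^j)-v\circ\gamma_j(t_{k-1}^j)|$. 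I expect this to be the main obstacle, and I would handle it either by (a) observing that one may first pass to a partition whose interior nodes are chosen from a set where $v\circ\gamma_j$ behaves well, or more cleanly (b) noting that if two consecutive nodes coincide ($t_{k-1}^j=t_k^j$) the increment is zero and can be dropped, and otherwise the half-open subintervals $[t_{k-1}^j,t_k^j)$ are genuinely disjoint — so one can keep the left endpoint and use the subcurve $\gamma_j|_{[t_{k-1}^j,\,t_k^j]}$ but treat overlaps at single points as negligible, since a finite set of points can be avoided by an arbitrarily small perturbation of the right endpoints while changing the sum by at most $\eps$. Either route reduces matters to two admissible competitors for $\pV(v,X)$.

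Having produced, for the odd-indexed subcurves across all $j$, an admissible finite collection of pairwise disjoint simple curves whose increment-sum is within $\eps$ of $\sum_j\sum_{k\ \mathrm{odd}}|v\circ\gamma_j(t_k^j)-v\circ\gamma_j(t_{k-1}^j)|$, and likewise for the even-indexed ones, I would conclude that each of these two sums is at most $\pV(v,X)+\eps$. Adding the two bounds gives
\[
\sum_j\PV(v\circ\gamma_j)-N\eps\le \sum_j\sum_k|v\circ\gamma_j(t_k^j)-v\circ\gamma_j(t_{k-1}^j)|\le 2\pV(v,X)+2\eps.
\]
Letting $\eps\to 0$ and then taking the supremum over all finite collections $\gamma_1,\dots,\gamma_N$ yields $\PV(v,X)\le 2\pV(v,X)$, as claimed. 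The only real work is the disjointness bookkeeping in the even/odd splitting and the perturbation argument needed to upgrade "disjoint up to finitely many endpoints" to "pairwise disjoint"; everything else is a direct unwinding of the two definitions.
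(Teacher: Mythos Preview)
Your approach is essentially the same as the paper's: the even/odd splitting of the subintervals of each $\gamma_j$ is exactly what the paper does. However, you manufacture a difficulty that is not actually there. After discarding degenerate subintervals (those with $t_{k-1}^j=t_k^j$, which contribute zero), the odd-indexed closed subintervals $[t_0^j,t_1^j],\,[t_2^j,t_3^j],\,\ldots$ are genuinely pairwise disjoint as subsets of $[0,\ell_j]$, since $t_1^j<t_2^j$, $t_3^j<t_4^j$, etc.; they do not share endpoints. Because each $\gamma_j$ is injective and the $\gamma_j$ are pairwise disjoint, the corresponding subcurves across all $j$ form an admissible competitor for $\pV(v,X)$ with no perturbation needed. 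The same holds for the even-indexed family. So the shrinking step, the continuity worry, and the $\eps$-bookkeeping can all be dropped: the paper's proof simply writes down the two families and concludes directly.
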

\begin{proof}
Consider a simple curve $\gamma$.
Take a partition $0=t_0\le t_1\le \ldots \le  t_n= \ell_{\gamma}$.
Suppose $n$ is odd (the case of even $n$ is similar).
Then the subcurves $\gamma_{|_{[t_{k},t_{k+1}]}}$, for $k=0,2,\ldots,n-1$, are disjoint,
and so are the subcurves $\gamma_{|_{[t_{k},t_{k+1}]}}$ for $k=1,3,\ldots,n-2$.
Let $\gamma^k$ be $\gamma_{|_{[t_{k},t_{k+1}]}}$ reparametrized by arc-length.
Then
\begin{align*}
&\sum_{k=0}^{n-1}|v(\gamma(t_{k}))-v(\gamma({t_{k+1}}))|\\
&\qquad = \sum_{k=0,2,\ldots,n-1}|v(\gamma^k(0))-v(\gamma^k(\ell_{\gamma^k}))|
+\sum_{k=1,3,\ldots,n-2}|v(\gamma^k(0))-v(\gamma^k(\ell_{\gamma^k}))|.
\end{align*}
Taking supremum over all partitions, we get
$\PV(v\circ\gamma,[0,\ell_{\gamma}])\le 2\pV(v,X)$.
If we consider collections of pairwise disjoint simple curves $\gamma_j$,
and if we do the above for each $\gamma_j$, we obtain that
$\PV(v,X)\le 2\pV(v,X)$.
\end{proof}

Next we show that we can find a \emph{good representative} $\widetilde{u}$
of any function $u$, with $\pV(\widetilde{u},X)=\Var(u,X)$.
In proving this we will take inspiration from Martio \cite{Mar2}. 
Given a function $v$ on $X$ and a set $D\subset X$, we define
\[
\pV_D(v,X):=\sup\left\{\sum_j |v\circ\gamma_j(\ell_{j})-v\circ\gamma_j(0)|\right\},
\]
where the supremum is taken over finite collections of pairwise disjoint simple
curves $\gamma_j\colon [0,\ell_{j}]\to X$ with endpoints $\gamma_j(0), \gamma_j(\ell_{j})\in D$.

\begin{proposition}\label{prop:extension from a set of full measure}
Let $D\subset X$ be an arbitrary set with $\mathcal{H}^1(X\setminus D)=0$.
Suppose $\pV_D(v,X)<\infty$. 
Then there exists a function $v_e$ on $X$
such that $v_e=v$ on $D$ and $\pV(v_e,X)=\pV_D(v, X)$.
\end{proposition}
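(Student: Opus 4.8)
The plan is to establish the nontrivial inequality $\pV(v_e,X)\le\pV_D(v,X)$ for a suitable extension $v_e$; the reverse inequality is immediate, because every finite collection of pairwise disjoint simple curves with endpoints in $D$ is admissible for $\pV(v_e,X)$ and $v_e=v$ on $D$, so $\pV(v_e,X)\ge\pV_D(v_e,X)=\pV_D(v,X)$.

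Two preliminary observations. Since $\pV_D(v,X)<\infty$, the function $v$ is bounded on $D$: otherwise one could join a fixed point of $D$ to points of $D$ where $|v|$ is arbitrarily large (using the First Rectifiability Theorem \ref{2.5.1}) and obtain $\pV_D(v,X)=\infty$. Next, for $x\in X$ and $r>0$ let $K(x,r)$ be the connected component of $x$ in the closed ball $\overline{B(x,r)}$; this is a closed connected set, so by Theorem \ref{2.5.1} it is compact and connected by simple curves, and since $X$ is connected it is a nondegenerate continuum, whence $\mathcal H^1(K(x,r))>0$ and therefore $K(x,r)\cap D\ne\emptyset$. Define $v_e:=v$ on $D$ and
\[
v_e(x):=\inf_{r>0}\ \sup\{v(y):\,y\in D\cap K(x,r)\}\qquad\text{for }x\in X\setminus D.
\]
Since $r'\le r$ implies $K(x,r')\subset K(x,r)$, the supremum above is non-increasing in $r$; by the boundedness of $v$ on $D$ it is finite and lies in $[\inf_D v,\sup_D v]$, so $v_e$ is a well-defined real-valued function on $X$.

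For the main estimate, fix a finite collection of $N$ pairwise disjoint simple curves $\gamma_j\colon[0,\ell_j]\to X$; discarding degenerate ones we may assume $\ell_j>0$, so the endpoints $a_j:=\gamma_j(0)$ and $b_j:=\gamma_j(\ell_j)$ are $2N$ distinct points of $X$. Fix $\eps>0$. Since $\dist(p,\gamma_k)>0$ whenever an endpoint $p$ does not lie on $\gamma_k$, and there are only finitely many conditions involved, we may choose $r_0>0$ so small that the closed balls $\overline{B(a_j,r_0)}$ and $\overline{B(b_j,r_0)}$ are pairwise disjoint, for each $j$ the balls $\overline{B(a_j,r_0)},\overline{B(b_j,r_0)}$ meet no $\gamma_k$ with $k\ne j$, and $\sup\{v(y):y\in D\cap K(p,r_0)\}<v_e(p)+\eps$ for every endpoint $p\notin D$. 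For each endpoint $p$: if $p\in D$, set $\widetilde p:=p$; if $p\notin D$, pick $\widetilde p\in D\cap K(p,r_0)$ with $v(\widetilde p)>\sup\{v(y):y\in D\cap K(p,r_0)\}-\eps$, so that $|v(\widetilde p)-v_e(p)|<\eps$ and $d(\widetilde p,p)\le r_0$, and join $p$ to $\widetilde p$ by a simple curve $\sigma_p\subset K(p,r_0)\subset\overline{B(p,r_0)}$. Set $\widehat\gamma_j:=\gamma_j\cup\sigma_{a_j}\cup\sigma_{b_j}$, which is compact, connected (it meets $\sigma_{a_j}$ at $a_j$ and $\sigma_{b_j}$ at $b_j$), and has finite $\mathcal H^1$-measure; by Theorem \ref{2.5.1} it contains a simple curve $\widetilde\gamma_j$ joining $\widetilde a_j$ to $\widetilde b_j$. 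By the choice of $r_0$ the sets $\widehat\gamma_j$ are pairwise disjoint, hence so are the $\widetilde\gamma_j$, which therefore form an admissible collection for $\pV_D(v,X)$. Consequently
\[
\pV_D(v,X)\ \ge\ \sum_{j=1}^N|v(\widetilde b_j)-v(\widetilde a_j)|\ \ge\ \sum_{j=1}^N|v_e(b_j)-v_e(a_j)|-2N\eps.
\]
Letting $\eps\to0$ and then taking the supremum over all such collections yields $\pV(v_e,X)\le\pV_D(v,X)$, as desired.

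The step I expect to require the most care is the modification in the third paragraph: replacing a given test family near its endpoints by curves with endpoints in $D$ carrying $v$-values close to the prescribed values $v_e(a_j),v_e(b_j)$, while preserving pairwise disjointness and simplicity. Both of the facts that make this work rely on $\mathcal H^1(X)<\infty$: a point can be joined to the points of its component of a small closed ball by a simple curve staying in that ball (so the modifications remain confined to disjoint small balls around the $2N$ distinct endpoints and create no new intersections), and the possibly non-simple concatenation $\widehat\gamma_j$ can be refined to an honest simple subcurve with the same endpoints — both via the First Rectifiability Theorem \ref{2.5.1}. One also has to check, as indicated above, that $v_e$ is finite and that the suprema defining it are realized well inside the small balls, which is exactly where the boundedness of $v$ on $D$ and the monotonicity in $r$ enter.
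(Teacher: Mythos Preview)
Your proof is correct and takes a genuinely different route from the paper's. The paper defines the extension by fixing a base point $z_0\in D$ and, for each $x\notin D$, a simple curve $\gamma_x$ from $x$ to $z_0$, then setting $v_e(x)$ to be the one-sided limit of $v\circ\gamma_x(t)$ along $t$ with $\gamma_x(t)\in D$; the perturbation of a test curve $\gamma_j$ at an endpoint $p_j\notin D$ is then done by a two-case argument (either extend $\gamma_j$ along a short piece of $\gamma_{p_j}$ when the two are disjoint near $p_j$, or restrict $\gamma_j$ to end at a nearby intersection point with $\gamma_{p_j}$). Your construction is intrinsic: $v_e(x)=\inf_{r>0}\sup_{D\cap K(x,r)}v$ does not depend on auxiliary curve choices, and your perturbation step avoids the case split entirely by attaching short simple arcs $\sigma_p\subset K(p,r_0)$ at the offending endpoints and then invoking Theorem~\ref{2.5.1} once more to extract a simple curve from the compact connected union $\widehat\gamma_j$. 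What this buys you is a cleaner, choice-free argument; what the paper's version buys is a more explicit description of $v_e$ as an honest one-sided curve limit, which makes it transparent (and is later used) that when $v$ takes only the values $0,1$ on $D$, so does $v_e$. Your definition shares that property too, but it is slightly less visible.
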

\begin{proof}
If $x\in D$, define $v_e(x)=v(x)$. Fix a point $z_0\in D$. For any point $x\in X\setminus D$, by the First Rectifiability Theorem (Theorem \ref{2.5.1}),
there exists a simple curve
$\gamma_{x}\colon [0,\ell_{x}]\to X$ with $\gamma_{x}(0)=x$ and
$\gamma_{x}(\ell_{x})=z_0$.
We define
\[
v_e(x):=\lim_{t\to 0^+,\,\gamma_x(t)\in D} v\circ \gamma_x (t).
\]
The limit exists since the quantity
\[
\sup\left\{\sum_{k=1}^{n-1}|v\circ \gamma_x(t_{k})-v\circ \gamma_x({t_{k+1}})|,
\ 0\le t_1\le 
\ldots \le t_n\le \ell_x,\,\gamma_x(t_k)\in D\right\}
\]
is finite, which follows from the condition $\pV_D(v,X)<\infty$ just
as in Lemma \ref{lem:PV and pV}.
Then we show that $v_e\colon X\to \mathbb{R}$, with $v_e=v$ on $D$,
satisfies $\pV(v_e,X)=\pV_D(v,X)$.
It is clear that $\pV(v_e,X)\ge \pV_D(v,X)$.
Conversely, let $\{\gamma_j\}_{j=1}^{n}$ be an arbitrary collection
of pairwise disjoint curves. If all the endpoints
$\gamma_j(0), \gamma_j(\ell_j)\in D$, then 
$$\sum_{j=1}^n |v\circ\gamma_j(\ell_{j})-v\circ\gamma_j(0)|=\sum_{j=1}^n |v_e\circ\gamma_j(\ell_{j})-v_e\circ\gamma_j(0)|.$$
If there exists a point $p_j=\gamma_j(\ell_j)\in X\setminus D$
(or $\gamma_j(0)\in X\setminus D$, or both), then we denote the
curve connecting $z_0$ and $p_j$ in the definition
of the function value of $v_e$ at $p_j$ by $\gamma_{p_j}\colon [0, \ell_{p_j}]\to X$.
Let $\epsilon>0$ be arbitrary. We discuss two cases:
\begin{itemize}
\item[(1)] If there exists $\delta>0$ such that $\gamma_j$ intersects
with $\gamma_{p_j}$ only at $p_j$ inside $B(p_j, \delta)$, then we
define a simple curve $\widetilde{\gamma}_j\colon [0,\widetilde{\ell}_j]\to X$ by
\[
\widetilde{\gamma}_j(t) :=
  \begin{cases}
                                   \gamma_j(t) & \text{if $0\le t\le \ell_j$} \\
                                   \gamma_{p_j}(t-\ell_j) & \text{if $\ell_j\le t\le \widetilde{\ell}_j$} 
  \end{cases}
\]
where $\widetilde{\ell}_j\le \ell_j+\delta$.
By choosing  $\widetilde{\ell}_j$ sufficiently close to $\ell_j$, we have that
\[
|v\circ\widetilde{\gamma}_j(\widetilde{\ell}_j)-v_e\circ\gamma_j(\ell_j)|<\frac{\epsilon}{2n}.
\]
Likewise, if $p_j=\gamma_j(0)\in X\setminus D$, we can also extend $\gamma_j$ slightly to $\widetilde{\gamma}_j$ by attaching a small piece of $\gamma_{p_j}$ at the endpoint such that
\[
|v\circ\widetilde{\gamma}_j(0)-v_e\circ\gamma_j(0)|<\frac{\epsilon}{2n}.
\]
\item [(2)] If for every $\delta>0$ there exists
$q\in B(p_j, \delta)$ with $q\neq p_j $ such that
$q=\gamma_j(\widetilde{t}) =\gamma_{p_j}(t)$ for some $\widetilde{t},t$,
then we define $\widetilde{\gamma}_j\colon [0,\widetilde{\ell}_j]\to X$
as the restriction of $\gamma_j$ to $[0, \widetilde{t}]$, so that
\[
\begin{aligned}
|v\circ\widetilde{\gamma}_j(\widetilde{\ell}_j)-v_e\circ \gamma_j(\ell_j)|&=|v\circ{\gamma}_j(\widetilde{t})-v_e\circ \gamma_j(\ell_j)|\\
&=|v\circ\gamma_{p_j}(t)-v_e(p_j)|\\
&\le \frac{\epsilon}{2n},
\end{aligned}
\]
if we choose $t$ sufficiently close to $0$. A similar modification works for the case when $p_j=\gamma_j(0).$
\end{itemize}
Then we get a new collection of curves $\{\widetilde{\gamma}_j\}_{j=1}^{n}$
defined as above if at least one of the endpoints of $\gamma_j$ belong
to $X\setminus D$. Furthermore, since the curves
$\gamma_j$ are pairwise disjoint, we can choose
$\delta$ sufficiently small such that the curves
$\widetilde{\gamma}_j$
are pairwise disjoint. Hence, we get that
\[
\sum_{j=1}^n |v_e\circ\gamma_j(\ell_{j})-v_e\circ\gamma_j(0)|\le \sum_{j=1}^n |{v}\circ\widetilde{\gamma}_j(\widetilde{\ell}_{j})-{v}\circ\widetilde{\gamma}_j(0)|+\epsilon.
\]
This implies that $\pV(v_e,X)\le \pV_D(v,X)$, and $\pV(v_e,X)= \pV_D(v,X)$ follows.
\end{proof}

\begin{proposition}\label{prop:good representative}
Suppose $\Var (u,X)<\infty$. Then there exists a function
$\widetilde{u}$ on $X$ with $\widetilde{u}=u$ a.e. and 
\[
\pV(\widetilde{u}, X)=\Var(u,X)=\inf\{\pV(v,X):\,v=u\textrm{ a.e. on }X\}.
\]
\end{proposition}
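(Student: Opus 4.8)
The plan is to bootstrap from Proposition \ref{prop:extension from a set of full measure}, which already contains the delicate part of the construction (producing an everywhere-defined representative by curve surgery without increasing the pointwise variation); what remains is to select the appropriate full-measure set $D$.

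First I would fix a minimizing sequence. By the definition of $\Var(u,X)$ there are functions $v_i$ on $X$ with $v_i=u$ a.e.\ and $\pV(v_i,X)\to\Var(u,X)$. Put $N_i:=\{x\in X:\ v_i(x)\neq u(x)\}$, so that $\mathcal H^1(N_i)=0$; then set $N:=\bigcup_{i=1}^{\infty}N_i$ and $D:=X\setminus N$, so that $\mathcal H^1(X\setminus D)=0$ and $v_i=u$ on $D$ for every $i$. Consequently, for any finite collection of pairwise disjoint simple curves $\gamma_j\colon[0,\ell_j]\to X$ with endpoints in $D$ one has
\[
\sum_j|u\circ\gamma_j(\ell_j)-u\circ\gamma_j(0)|=\sum_j|v_i\circ\gamma_j(\ell_j)-v_i\circ\gamma_j(0)|\le\pV(v_i,X),
\]
and taking the supremum over all such collections and then letting $i\to\infty$ gives $\pV_D(u,X)\le\Var(u,X)<\infty$.

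Next I would apply Proposition \ref{prop:extension from a set of full measure} to this set $D$ and the function $v=u$: it yields a function $\widetilde u:=v_e$ on $X$ with $\widetilde u=u$ on $D$ (hence $\widetilde u=u$ a.e.) and $\pV(\widetilde u,X)=\pV_D(u,X)\le\Var(u,X)$. On the other hand, since $\widetilde u=u$ a.e., the definition of $\Var(u,X)$ as an infimum gives $\pV(\widetilde u,X)\ge\Var(u,X)$. Hence $\pV(\widetilde u,X)=\Var(u,X)$, which is precisely the asserted identity, and $\widetilde u$ is the desired good representative.

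I do not expect a genuine obstacle at this stage: the hard analytic work is internal to Proposition \ref{prop:extension from a set of full measure}. The only points that need care are the estimate $\pV_D(u,X)\le\Var(u,X)$ — which relies on the fact that a single null set $N$ can be chosen to work simultaneously for the entire minimizing sequence — and the observation that a.e.-equality of $\widetilde u$ with $u$ forces the reverse inequality $\pV(\widetilde u,X)\ge\Var(u,X)$; together these pin down the value exactly.
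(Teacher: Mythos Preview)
Your proof is correct and follows essentially the same approach as the paper's: take a minimizing sequence, intersect the full-measure agreement sets to get a single set $D$, and then invoke Proposition \ref{prop:extension from a set of full measure}. The paper first fixes an auxiliary $v=u$ a.e.\ with $\pV(v,X)<\infty$ and works with $v$ on the intersection set, whereas you work directly with $u$; since all the functions coincide on $D$ anyway, this is only a cosmetic difference, and your write-up is in fact slightly cleaner.
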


\begin{proof}
Take a function $v=u$ a.e. with $\pV(v,X)<\infty$.
Let $u_i\colon X\to \mathbb{R}$ be a sequence such that $u_i=v$ on $D_i$
with $\mathcal{H}^1(X\setminus D_i)=0$ and $\pV(v_i, X)\to \Var(u,X)$. Let
$D_0:=\bigcap_i D_i$.
Then $u_i=v$ on $D_0$ and $\mathcal{H}^1(X\setminus D_0)=0$.
By Proposition \ref{prop:extension from a set of full measure} there exists
$\widetilde{u}\colon X\to \R$ such that $\widetilde{u}=v$ on $D_0$ and
\[
\pV(\widetilde{u}, X)=\pV_{D_0}(v,X)=\pV_{D_0}(u_i,X)
\le \pV(u_i,X)\to \Var(u,X)\quad\text{as }i\to\infty.
\]
\end{proof}

We have the following lower semicontinuity results.

\begin{proposition}\label{prop:lower semicontinuity}
	Suppose $D\subset X$ and $v_i(x)\to v(x)$ for all $x\in D$.
	Then
	\[
	\pV_D(v,X)\le \liminf_{i\to\infty}\pV_D(v_i,X).
	\]
	Next suppose $u_i\to u$ in $L^1(X)$. Then
	\[
	\Var(u,X)\le \liminf_{i\to\infty}\Var(u_i,X).
	\]
\end{proposition}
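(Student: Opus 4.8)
The plan is to prove the two statements in order, deriving the second from the first together with Propositions~\ref{prop:good representative} and~\ref{prop:extension from a set of full measure}.

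For the first statement I would argue by direct pointwise lower semicontinuity. Fix any finite collection of pairwise disjoint simple curves $\gamma_j\colon[0,\ell_j]\to X$ whose endpoints $\gamma_j(0),\gamma_j(\ell_j)$ all lie in $D$. Since the collection is finite and $v_i\to v$ on $D$, the sum $\sum_j|v_i\circ\gamma_j(\ell_j)-v_i\circ\gamma_j(0)|$ converges to $\sum_j|v\circ\gamma_j(\ell_j)-v\circ\gamma_j(0)|$; as each term of the sequence is at most $\pV_D(v_i,X)$, the limit is at most $\liminf_{i\to\infty}\pV_D(v_i,X)$. Taking the supremum over all such admissible collections then yields $\pV_D(v,X)\le\liminf_{i\to\infty}\pV_D(v_i,X)$.

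For the second statement I would first reduce to the case $L:=\liminf_{i\to\infty}\Var(u_i,X)<\infty$ and, after passing to a subsequence (not relabeled), assume $\Var(u_i,X)\to L$; passing to a further subsequence I would also assume $u_i\to u$ $\mathcal H^1$-a.e. Then I would invoke Proposition~\ref{prop:good representative} to replace each $u_i$ by a good representative $\widetilde u_i$, so that $\pV(\widetilde u_i,X)=\Var(u_i,X)$ and $\widetilde u_i=u_i$ outside an $\mathcal H^1$-null set. Setting $D_1$ to be the intersection of the a.e.\ convergence set with all of these countably many full-measure sets, one has $\mathcal H^1(X\setminus D_1)=0$ and $\widetilde u_i\to u$ pointwise on $D_1$. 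Applying the first statement with $D=D_1$ and using $\pV_{D_1}\le\pV$ gives
\[
\pV_{D_1}(u,X)\le\liminf_{i\to\infty}\pV_{D_1}(\widetilde u_i,X)\le\liminf_{i\to\infty}\pV(\widetilde u_i,X)=L<\infty,
\]
so Proposition~\ref{prop:extension from a set of full measure} provides $u_e$ with $u_e=u$ on $D_1$ and $\pV(u_e,X)=\pV_{D_1}(u,X)\le L$. Since $u_e=u$ a.e., the definition of $\Var$ yields $\Var(u,X)\le\pV(u_e,X)\le L$, as desired.

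The only real obstacle is bookkeeping: the good representatives agree with $u_i$ only off an $i$-dependent null set, so one cannot speak directly of pointwise convergence of the $\widetilde u_i$. This is resolved by the countable intersection defining $D_1$, after which the argument is just the elementary first part combined with the extension result; I do not expect any analytic difficulty beyond this.
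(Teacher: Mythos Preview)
Your proposal is correct and follows essentially the same approach as the paper: the first statement is proved by the same direct limit argument on a fixed admissible collection, and the second by passing to good representatives via Proposition~\ref{prop:good representative}, extracting an a.e.\ convergent subsequence, applying the first part on a full-measure set, and then invoking Proposition~\ref{prop:extension from a set of full measure}. Your bookkeeping (first extracting a subsequence achieving the $\liminf$, then a further a.e.\ convergent subsequence, then intersecting the countably many full-measure sets) is in fact slightly more explicit than the paper's presentation.
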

\begin{proof}
	The first claim is easy to check.
	To prove the second, we can assume that
	the right-hand side is finite and in fact that
	$\Var(u_i,X)<\infty$ for each $i\in\N$, and then
	we can 
	choose good representatives $\widetilde{u_i}$.
	Passing to a subsequence (not relabeled)
	we have $\widetilde{u_i}(x)\to u(x)$ for every $x\in D$ with
	$\mathcal H^1(X\setminus D)=0$.
	By the first claim,
	\begin{equation}\label{eq:lsc in D}
	\begin{aligned}
	\pV_D(u, X)&\le \liminf_{i\to\infty}\pV_D(\widetilde{u_i},X)\\
	&\le \liminf_{i\to\infty}\pV(\widetilde{u_i},X)\\
	&=\liminf_{i\to\infty}\Var(u_i,X)<\infty.
	\end{aligned}
	\end{equation}
	By Proposition \ref{prop:extension from a set of full measure}, there exists an extension $u_e$ for $u$ restricted to $D$ satisfying $u_e=u$ on $D$
	and $\pV(u_e, X)=\pV_D(u, X)$. In particular, $u_e=u$ a.e. on $X$.
	We get
	\begin{align*}
	\Var(u,X)
	&={\inf\{\pV(v,X):\, v=u \text{ a.e. on } X\}}\\
	&\le \pV(u_e, X)\\
	&= \pV_D(u, X)\\
	&=\liminf_{i\to\infty}\Var(u_i,X)
	\end{align*}
by \eqref{eq:lsc in D}.
\end{proof}

\subsubsection{Approximation by curve-continuous functions}
We say that a function $v$ on $X$ is curve-continuous if
	$v\circ\gamma$ is continuous for every curve $\gamma$ in $X$. In this part, we exploit the nice properties of curve-continuous functions to show that every function with $\Var(u,X)<\infty$ is $\mathcal{H}^1$-measurable and it can be approximated
in $L^1(X)$ by a sequence of curve-continuous functions $u_i$ such that
\[
\limsup_{i\to \infty} \pV(u_i, X)\le C_1\Var(u,X)
\]
for some constant $C_1$ depending only on $C_0$ in the density upper bound condition \eqref{eq:blow up condition liminf}. We first show that every curve-continuous function is $\mathcal{H}^1$ measurable.

\begin{lemma}\label{lem:measurability}
Let $v$ be a curve-continuous function on $X$. Then $v$ is $\mathcal H^1$-measurable.
\end{lemma}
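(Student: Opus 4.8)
The plan is to decompose $X$, up to an $\mathcal H^1$-null set, into countably many curve images on which $v$ is forced to be continuous, and then to assemble the measurability of $v$ from the measurability of these pieces. Concretely, I would first apply the Second Rectifiability Theorem \ref{2.5.2} with $C=E=X$ (legitimate since $X$ is complete, closed and connected in itself, and $\mathcal H^1(X)<\infty$): this yields arc-length parametrized simple curves $\gamma_i\colon[0,\ell_i]\to X$ with $\mathcal H^1(X\setminus\bigcup_i\Gamma_i)=0$, where $\Gamma_i:=\gamma_i([0,\ell_i])$. Set $N:=X\setminus\bigcup_i\Gamma_i$, so $\mathcal H^1(N)=0$.

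The heart of the argument is the observation that $v|_{\Gamma_i}$ is continuous for every $i$. Here I would use that $\gamma_i\colon[0,\ell_i]\to\Gamma_i$ is a continuous surjection from a compact space onto a Hausdorff (metric) space, hence maps closed (compact) sets to closed sets; thus $\gamma_i$ is a closed continuous surjection, i.e. a topological quotient map. By the universal property of quotient maps, $f\colon\Gamma_i\to\R$ is continuous if and only if $f\circ\gamma_i$ is continuous, and applying this to $f=v|_{\Gamma_i}$ — whose composite with $\gamma_i$ is exactly $v\circ\gamma_i$, continuous by curve-continuity — gives continuity of $v|_{\Gamma_i}$. I would emphasize that this step needs no injectivity of the $\gamma_i$, so it covers simple closed curves as well; this is also an alternative to the more heavy-handed route of invoking that continuous images of Borel sets are analytic and hence $\mathcal H^1$-measurable.

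Finally I would assemble the pieces. For open $A\subset\R$, write
\[
v^{-1}(A)=\big(v^{-1}(A)\cap N\big)\cup\bigcup_{i=1}^\infty\big(v^{-1}(A)\cap\Gamma_i\big).
\]
The first set is a subset of the $\mathcal H^1$-null set $N$, hence Carathéodory $\mathcal H^1$-measurable (any subset of a null set is measurable for an outer measure). For each $i$, $v^{-1}(A)\cap\Gamma_i=(v|_{\Gamma_i})^{-1}(A)$ is relatively open in $\Gamma_i$, hence of the form $U\cap\Gamma_i$ with $U\subset X$ open; since $\Gamma_i$ is closed in $X$, this set is Borel in $X$, in particular $\mathcal H^1$-measurable. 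A countable union of $\mathcal H^1$-measurable sets being $\mathcal H^1$-measurable, $v^{-1}(A)$ is $\mathcal H^1$-measurable; letting $A$ range over all open subsets of $\R$ yields the claim.

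I expect the main (and essentially only) obstacle to be the step showing $v|_{\Gamma_i}$ is continuous, precisely because the parametrizations $\gamma_i$ supplied by Theorem \ref{2.5.2} need not be injective; the quotient-map viewpoint dispatches this cleanly and also clarifies why curve-continuity is the right hypothesis. Everything after that is routine bookkeeping with measurable sets.
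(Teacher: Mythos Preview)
Your proof is correct and follows essentially the same approach as the paper: both invoke the Second Rectifiability Theorem to cover $X$ by countably many curve images up to an $\mathcal H^1$-null set, then use curve-continuity to see that $v$ restricted to each image is well-behaved (the paper observes directly that $\Gamma_j\cap\{v\ge t\}=\gamma_j\big((v\circ\gamma_j)^{-1}[t,\infty)\big)$ is compact, you instead establish continuity of $v|_{\Gamma_i}$ and look at preimages of open sets). Your quotient-map detour to handle possible non-injectivity is unnecessary here since the curves supplied by Theorem~\ref{2.5.2} are simple, hence homeomorphisms onto their images, but it does no harm.
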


\begin{proof}
Let $t\in \R$. It suffices to show that $\{v\ge t\}$ is $\mathcal H^1$-measurable.
By curve-continuity, for each curve $\gamma\colon [0,\ell]\to X$ the set
$\gamma([0,\ell])\cap \{v\ge t\}$ is compact.
By the Second Rectifiability Theorem \ref{2.5.2}, there exist curves
$\gamma_j\colon [0,\ell_j]\to X$, $j\in\N$, such that
\[
\mathcal H^1\left(X\setminus \bigcup_{j=1}^{\infty}\gamma_j([0,\ell_j])\right)=0.
\]
The set $\bigcup_{j=1}^{\infty}(\gamma_j([0,\ell_j])\cap \{v\ge t\})$ is a Borel
set and differs from $\{v\ge t\}$ only by a set of $\mathcal H^1$-measure zero.
\end{proof}

For a function $v$ on $X$ and $t\in\R$, $r>0$, we define
the truncations $v_{t}:=\min\{t,v\}$ and $v_{t,t+r}:=\max\{t,\min\{t+r,v\}\}$.

\begin{lemma}\label{lem:truncation lemma}
Let $v$ be a curve-continuous function on $X$ with $\pV(v,X)<\infty$ and let $t\in\R$, $r>0$.
Then
\[
\pV(v_{t},X)+\pV(v_{t,t+r},X)\le \pV(v_{t+r},X).
\]
\end{lemma}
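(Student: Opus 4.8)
For a simple curve $\gamma\colon[0,\ell_\gamma]\to X$ and a function $w$ on $X$ abbreviate $\Delta_\gamma w:=w\circ\gamma(\ell_\gamma)-w\circ\gamma(0)$. The plan is to establish the equivalent inequality
\[
\sum_{\gamma\in\mathcal A}|\Delta_\gamma v_{t}|+\sum_{\gamma\in\mathcal B}|\Delta_\gamma v_{t,t+r}|\le \pV(v_{t+r},X)
\]
for every pair of finite families $\mathcal A$, $\mathcal B$ of pairwise disjoint simple curves in $X$; since $\pV(v_t,X)=\sup_{\mathcal A}\sum_{\gamma\in\mathcal A}|\Delta_\gamma v_t|$ and likewise for $v_{t,t+r}$, taking the suprema over $\mathcal A$ and over $\mathcal B$ separately then gives the lemma. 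Throughout we use that, since $\pV(v,X)<\infty$, Lemma~\ref{lem:PV and pV} gives $\PV(v\circ\gamma,[0,\ell_\gamma])<\infty$ for every curve $\gamma$; together with curve-continuity this means that $v\circ\gamma$ and each of its truncations is a \emph{continuous} function of \emph{bounded variation} on $[0,\ell_\gamma]$.

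The first step is to push $\mathcal A$ into the sublevel set $\{v\le t\}$ and $\mathcal B$ into the superlevel set $\{v\ge t\}$ without losing the relevant variation. Fix $\gamma\in\mathcal A$. Since $v\circ\gamma$ is continuous the set $\gamma^{-1}(\{v>t\})$ is open and $v_t\circ\gamma\equiv t$ on it; as $v_t\circ\gamma$ is continuous and of bounded variation, a standard telescoping of $\Delta_\gamma v_t$ over the non-degenerate components $[u_k,w_k]$ of the complementary closed set $\gamma^{-1}(\{v\le t\})$ (the pieces where $v_t\circ\gamma$ is constant contributing nothing) yields the absolutely convergent identity $\Delta_\gamma v_t=\sum_k\bigl(v_t\circ\gamma(w_k)-v_t\circ\gamma(u_k)\bigr)$. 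Hence finitely many of the subcurves $\gamma|_{[u_k,w_k]}$ already satisfy $\sum_k|v_t\circ\gamma(w_k)-v_t\circ\gamma(u_k)|\ge|\Delta_\gamma v_t|-\varepsilon/\#\mathcal A$, and each of them lies in $\{v\le t\}$, where $v_t=v_{t+r}=v$. Performing the symmetric construction for each $\gamma\in\mathcal B$ — now $v_{t,t+r}\circ\gamma\equiv t$ on the open set $\gamma^{-1}(\{v<t\})$, and on $\{v\ge t\}$ one has $v_{t,t+r}=v_{t+r}=\min\{t+r,v\}$ — produces finite families $\mathcal A'$ and $\mathcal B'$ of pairwise disjoint simple curves contained in $\{v\le t\}$ and in $\{v\ge t\}$ respectively, with
\[
\sum_{\gamma\in\mathcal A'}|\Delta_\gamma v_{t+r}|\ \ge\ \sum_{\gamma\in\mathcal A}|\Delta_\gamma v_t|-\varepsilon,
\qquad
\sum_{\gamma\in\mathcal B'}|\Delta_\gamma v_{t+r}|\ \ge\ \sum_{\gamma\in\mathcal B}|\Delta_\gamma v_{t,t+r}|-\varepsilon .
\]

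The second step is to merge $\mathcal A'$ and $\mathcal B'$ into one admissible family for $\pV(v_{t+r},X)$. The only obstruction is that a curve of $\mathcal A'$ may intersect a curve of $\mathcal B'$; but any common point $p$ satisfies $v(p)\le t$ and $v(p)\ge t$, so $v(p)=t$ and $v_{t+r}(p)=t$. Let $B'$ denote the compact union of the curves of $\mathcal B'$. For each $\gamma\in\mathcal A'$ I would replace $\gamma$ by finitely many components of the open set $\gamma^{-1}(X\setminus B')$, each shrunk slightly so that its closure is disjoint from $B'$; since $v_{t+r}\circ\gamma$ is continuous, of bounded variation, and identically $t$ on $\gamma^{-1}(B')$, the same telescoping/continuity argument shows this costs at most a further $\varepsilon$ in the total of $|\Delta_\gamma v_{t+r}|$. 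The surviving pieces together with $\mathcal B'$ then form a finite family of pairwise disjoint simple curves, whence
\[
\pV(v_{t+r},X)\ \ge\ \Bigl(\sum_{\gamma\in\mathcal A'}|\Delta_\gamma v_{t+r}|-\varepsilon\Bigr)+\sum_{\gamma\in\mathcal B'}|\Delta_\gamma v_{t+r}|\ \ge\ \sum_{\gamma\in\mathcal A}|\Delta_\gamma v_t|+\sum_{\gamma\in\mathcal B}|\Delta_\gamma v_{t,t+r}|-3\varepsilon .
\]
Letting $\varepsilon\to 0$ and using the arbitrariness of $\mathcal A$, $\mathcal B$ yields the displayed inequality and hence the lemma.

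The \textbf{main obstacle} is precisely this merging step. The competitors realizing $\pV(v_t,X)$ and $\pV(v_{t,t+r},X)$ are chosen independently and a priori share parts of curves, so their union is not an admissible competitor for $\pV(v_{t+r},X)$ and one cannot simply add the two suprema. What makes it work is that $v_t$ is frozen at the value $t$ on all of $\{v\ge t\}$ while $v_{t,t+r}$ is frozen at $t$ on all of $\{v\le t\}$, so collapsing those regions destroys neither variation and lets the two families be pushed into $\{v\le t\}$ and $\{v\ge t\}$; the two pushed families can then meet only inside $\{v=t\}$, where $v_{t+r}$ is itself frozen at $t$, so such overlaps are removable at negligible cost. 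Carrying out the telescoping over the possibly infinitely many subcurves, while keeping everything a finite family of genuinely disjoint injective curves (and performing the small perturbations), is the bookkeeping the argument must handle with care.
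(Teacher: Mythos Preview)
Your proof is correct and rests on the same idea as the paper's: push the test curves for $\pV(v_t,X)$ into the sub-level set and those for $\pV(v_{t,t+r},X)$ into the super-level set, then combine into one admissible family for $\pV(v_{t+r},X)$.

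The paper, however, takes a shorter route that avoids your second step entirely. Instead of pushing into the \emph{closed} sets $\{v\le t\}$ and $\{v\ge t\}$ via a full telescoping over all components, it pushes into the \emph{open} sets $\{v<t\}$ and $\{v>t\}$. For a curve $\gamma$ contributing to $\pV(v_t,X)$, one may assume $\gamma(0)\in\{v<t\}$; if $\gamma$ meets $\{v\ge t\}$, the paper records only the first and last times $s_1,s_2$ this happens, picks nearby $\widetilde s_1<s_1$ and $\widetilde s_2>s_2$ with $v_t\circ\gamma(\widetilde s_i)>t-\varepsilon$ by curve-continuity, and replaces $\gamma$ by at most two subcurves $\gamma|_{[0,\widetilde s_1]}$, $\gamma|_{[\widetilde s_2,\ell_\gamma]}$ lying entirely in $\{v<t\}$. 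The triangle inequality $|\Delta_\gamma v_t|\le |v_t(\gamma(0))-t|+|t-v_t(\gamma(\ell_\gamma))|$ shows these two subcurves already recover $|\Delta_\gamma v_t|-2\varepsilon$. The analogous construction for $v_{t,t+r}$ lands in $\{v>t\}$. Since $\{v<t\}$ and $\{v>t\}$ are disjoint, the combined family is automatically admissible---no merging step, no handling of overlaps on $\{v=t\}$.

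So your telescoping and second surgery are valid but do more work than necessary: for the \emph{endpoint} quantity $|\Delta_\gamma v_t|$ (as opposed to the full $\PV$), two subcurves always suffice, and targeting the strict level sets buys disjointness for free.
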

\begin{proof}
Consider a curve $\gamma$ used in estimating $\pV(v_t,X)<\infty$.
Note that $v_t\equiv t$ in $\{v\ge  t\}$.
Thus, by also reversing direction if necessary,
we can assume that $\gamma(0)\in \{v< t\}$.
Suppose also $\gamma(\ell_{\gamma})\in \{v< t\}$, but $\gamma$
intersects $\{v\ge t\}$. Let $s_1,s_2$ be
the smallest and largest number, respectively,
for which $\gamma(s_1),\gamma(s_2)\in \{v\ge t\}$; these exist by the curve-continuity.
If $\eps>0$, by curve-continuity we find
$\widetilde{s}_1<s_1,\widetilde{s}_2>s_2$ such that
$v_t(\gamma(\widetilde{s}_1))>t-\eps$ and $v_t(\gamma(\widetilde{s}_2))>t-\eps$.
Then for the subcurves $\gamma_1:=\gamma|_{[0,\widetilde{s}_1]}$
and $\gamma_2:=\gamma|_{[\widetilde{s}_2,\ell_{\gamma}]}$
(reparametrized by arc-length) we have
\[
|v_t(\gamma_1(0))-v_t(\gamma_1(\ell_{\gamma_1}))|
\ge |v_t(\gamma(0))-t|-\eps
\]
and
\[
|v_t(\gamma_2(0))-v_t(\gamma_2(\ell_{\gamma_2}))|
\ge |v_t(\gamma(\ell_{\gamma}))-t|-\eps.
\]
Thus
\[
|v_t(\gamma_1(0))-v_t(\gamma_1(\ell_{\gamma_1}))|
+|v_t(\gamma_2(0))-v_t(\gamma_2(\ell_{\gamma_2}))|\ge 
|v_t(\gamma(0))-v_t(\gamma(\ell_{\gamma}))|-2\eps.
\]
Since $\eps>0$ was arbitrary, we conclude that in the definition of 
$\pV(v,X)$, we can replace the curve $\gamma$ by two curves that
are contained in $\{v< t\}$. Similarly, if $\gamma(0)\in \{v< t\}$
and $\gamma(\ell_{\gamma})\in \{v\ge t\}$, we can replace such
$\gamma$ by one subcurve that is in $\{v< t\}$.

Now fix $\eps>0$ and take a collection of
pairwise disjoint simple curves $\gamma_j$ contained inside
$\{v< t\}$ such that
\[
\sum_{j=1}^{N_1} |v_{t}\circ\gamma_j(\ell_{j})- v_{t}\circ\gamma_j(0)|+\eps>\pV(v_{t},X).
\]
Analogously,  we find a collection of
pairwise disjoint simple curves $\gamma_j$ contained inside
$\{v> t\}$ such that
\[
\sum_{j=N_1+1}^{N_2} |v_{t,t+r}\circ\gamma_j(\ell_{j})- v_{t,t+r}\circ\gamma_j(0)|+\eps>\pV(v_{t,t+r},X).
\]
Now the curves $\gamma_j$, $j=1,\ldots, N_2$, are pairwise disjoint, and thus
\begin{align*}
&\pV(v_{t},X)+\pV(v_{t, t+r},X) \\
&\qquad \le \sum_{j=1}^{N_1} |v_{t}\circ\gamma_j(\ell_{j})- v_{t}\circ\gamma_j(0)|+\sum_{j=N_1+1}^{N_2} |v_{t,t+r}\circ\gamma_j(\ell_{j})- v_{t,t+r}\circ\gamma_j(0)|+2\eps\\
&\qquad= \sum_{j=1}^{N_2} |v_{t+r}\circ\gamma_j(\ell_{j})- v_{t+r}\circ\gamma_j(0)|+2\eps\\
&\qquad\le \pV(v_{t+r},X)+2\eps.
\end{align*}
Letting $\eps\to 0$, we get $\pV(v_{t},X)+\pV(v_{t, t+r},X)\le \pV(v_{t+r},X)$.
\end{proof}

\begin{lemma}\label{lem:level sets}
Let $v$ be a curve-continuous function on $X$ and $t\in\R$, $r>0$. Then
\[
\pV(\ch_{\{v>t\}},X)\le \liminf_{r\to 0}\frac{1}{r} \pV(v_{t,t+r},X).
\]
\end{lemma}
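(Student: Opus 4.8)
The plan is to test $\pV(\ch_{\{v>t\}},X)$ against an arbitrary finite collection of pairwise disjoint simple curves and then reuse (a sub-collection of) those very same curves as admissible test curves for $\pV(v_{t,t+r},X)$, with $r$ small enough that the truncation $v_{t,t+r}$ already sees the full jump of height $r$ at the relevant endpoints.

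First I would fix a finite collection of pairwise disjoint simple curves $\gamma_j\colon[0,\ell_j]\to X$, $j=1,\ldots,N$, and let $J$ be the set of indices for which $|\ch_{\{v>t\}}(\gamma_j(\ell_j))-\ch_{\{v>t\}}(\gamma_j(0))|=1$; for the remaining indices the corresponding summand vanishes, so $\sum_{j=1}^N|\ch_{\{v>t\}}(\gamma_j(\ell_j))-\ch_{\{v>t\}}(\gamma_j(0))|=|J|$. If $J=\emptyset$ the claim is trivial, so assume $J\neq\emptyset$. For each $j\in J$ exactly one endpoint lies in $\{v>t\}$; reversing the arc-length parametrization when necessary (this preserves injectivity, the image, and pairwise disjointness), we may assume $v(\gamma_j(0))\le t$ and $v(\gamma_j(\ell_j))>t$ for every $j\in J$. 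Since $J$ is finite, $r_0:=\min_{j\in J}\big(v(\gamma_j(\ell_j))-t\big)>0$.

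Next, for any $0<r<r_0$ I would evaluate $v_{t,t+r}$ at the endpoints of the curves $\gamma_j$, $j\in J$. From $v_{t,t+r}=\max\{t,\min\{t+r,v\}\}$ one reads off that $v_{t,t+r}=t$ on $\{v\le t\}$ and $v_{t,t+r}=t+r$ on $\{v\ge t+r\}$; since $r<r_0$ we have $v(\gamma_j(\ell_j))\ge t+r$, hence $v_{t,t+r}(\gamma_j(\ell_j))-v_{t,t+r}(\gamma_j(0))=r$ for every $j\in J$. The curves $\{\gamma_j\}_{j\in J}$ still form a collection of pairwise disjoint simple curves, so they are admissible in the definition of $\pV(v_{t,t+r},X)$, and therefore
\[
|J|\,r=\sum_{j\in J}\big|v_{t,t+r}(\gamma_j(\ell_j))-v_{t,t+r}(\gamma_j(0))\big|\le \pV(v_{t,t+r},X).
\]
Dividing by $r$ and letting $r\to 0^+$ yields $|J|\le\liminf_{r\to 0}\frac{1}{r}\pV(v_{t,t+r},X)$.

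Finally, since $|J|=\sum_{j=1}^N|\ch_{\{v>t\}}(\gamma_j(\ell_j))-\ch_{\{v>t\}}(\gamma_j(0))|$ and the collection $\{\gamma_j\}$ was arbitrary, taking the supremum over all such collections gives $\pV(\ch_{\{v>t\}},X)\le\liminf_{r\to 0}\frac{1}{r}\pV(v_{t,t+r},X)$, which is the assertion. There is no substantial obstacle; the only point requiring a little care is that the threshold $r_0$ depends on the chosen collection, which is why one must fix the finite collection \emph{before} sending $r\to 0$ (equivalently, why the conclusion is phrased with a $\liminf$ and an outer supremum rather than a uniform bound). Curve-continuity of $v$ is not in fact needed for this particular estimate; it is simply the standing hypothesis of this subsection and will be used elsewhere.
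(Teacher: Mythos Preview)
Your proof is correct and follows essentially the same approach as the paper: fix a finite test collection, observe that for all sufficiently small $r$ the truncation $v_{t,t+r}$ already attains the values $t$ and $t+r$ at the relevant endpoints (the paper phrases this as the pointwise identity $\ch_{\{v>t\}}(\gamma(s))=\lim_{r\to 0}r^{-1}(v_{t,t+r}(\gamma(s))-t)$), bound by $\pV(v_{t,t+r},X)$, divide by $r$, and then take the supremum over collections. Your remark that curve-continuity is not actually used in this step is also correct.
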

\begin{proof}
Let $\gamma\colon [0,\ell]\to X$ be a simple curve.
We have for every $s\in [0,\ell]$
\[
\ch_{\{v>t\}}(\gamma(s))=\lim_{r\to 0}\frac{v_{t,t+r}(\gamma(s))-t}{r}.
\]
In fact, if $v(\gamma(s))\le t,$ then $\ch_{\{v>t\}}(\gamma(s))=0$ and $v_{t,t+r}(\gamma(s))=t$. If $v(\gamma(s))> t,$  then $\ch_{\{v>t\}}(\gamma(s))=1$ . Choose $r_0$ sufficiently small such that $v(\gamma(s))\ge t+r$ for all $r\le r_0$ and then
$v_{t,t+r}(\gamma(s))=t+r$.

Now
\begin{align*}
|\ch_{\{v>t\}}\circ \gamma (\ell)-\ch_{\{v>t\}}\circ\gamma(0)|
=\lim_{r\to 0}r^{-1}
|v_{t,t+r}\circ \gamma (\ell)-v_{t,t+r}\circ \gamma (0)|.
\end{align*}

Let $\eps>0$.
Then take a collection of
pairwise disjoint injective curves $\gamma_j$ such that
\begin{align*}
\min\{\pV(\ch_{\{v>t\}},X),\eps^{-1}\}
&\le \sum_{j=1}^N |\ch_{\{v>t\}}\circ\gamma_j(\ell_{j})-\ch_{\{v>t\}}\circ\gamma_j(0)|+\eps\\
&= \sum_{j=1}^N \lim_{r\to 0}r^{-1} |v_{t,t+r}\circ\gamma_j(\ell_{j})-v_{t,t+r}\circ\gamma_j(0)|+\eps\\
&= \lim_{r\to 0}r^{-1}\sum_{j=1}^N |v_{t,t+r}\circ\gamma_j(\ell_{j})-v_{t,t+r}\circ\gamma_j(0)|+\eps\\
&\le \liminf_{r\to 0}r^{-1}\pV(v_{t,t+r},X)+\eps.
\end{align*}

Letting $\eps\to 0$, we get the result.
\end{proof}

For any functions $v,w$ on $X$, we clearly have the subadditivity
\begin{equation}\label{eq:pV subadditivity}
\pV(v+w,X)\le \pV(v,X)+\pV(w,X).
\end{equation}

Define the \emph{inner metric} $d_{in}$ by
\[
d_{in}(x,y):=\inf\{\ell_{\gamma}:\,\gamma\textrm{ is a curve such that }\gamma(0)=x,\,\gamma(\ell_{\gamma})=y\},\quad x,y\in X.
\]
Denote a ball with respect to the inner metric by $B_{in}(x,r)$.

\begin{proposition}\label{prop:approximation by continuous functions}
Suppose there exists a constant $C_0$ such that for all $x\in X$
\[
\liminf_{r\to 0}\frac{\mathcal H^1(B(x,r))}{r}<C_0
\]
holds. Suppose $\Var(u,X)<\infty$. Then $u$ is $\mathcal H^1$-measurable and
there exists a sequence of curve-continuous functions
$u_i\to u$ in $L^1(X)$ such that
\[
\limsup_{i\to\infty}\pV(u_i,X)\le C_1\Var(u,X).
\]
for a constant $C_1$ that depends only on $C_0$.
\end{proposition}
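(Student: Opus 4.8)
The plan is to pass to a good representative, approximate it by a discrete convolution built on a covering of $X$ adapted to the density bound \eqref{eq:blow up condition liminf}, and control $\pV$ of the approximants through the inner‑metric pointwise Lipschitz constant.

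\textbf{Reduction and measurability.} First I would invoke Proposition \ref{prop:good representative} to replace $u$ by a good representative $\widetilde u$, so that $\widetilde u=u$ a.e., $\pV(\widetilde u,X)=\Var(u,X)=:V$, and $\widetilde u$ is bounded with $\osc_X\widetilde u\le V$ (any two points are joined by a simple curve along which $\widetilde u$ changes by at most $\pV(\widetilde u,X)$). For measurability, the Second Rectifiability Theorem \ref{2.5.2} gives simple curves $\gamma_i\colon[0,\ell_i]\to X$ with $\mathcal H^1\big(X\setminus\bigcup_i\gamma_i([0,\ell_i])\big)=0$; on each image $\widetilde u$ equals $(\widetilde u\circ\gamma_i)\circ\gamma_i^{-1}$, with $\gamma_i^{-1}$ continuous (a continuous bijection from a compact interval to a Hausdorff space is a homeomorphism) and $\widetilde u\circ\gamma_i$ of pointwise variation $\le 2V$ by Lemma \ref{lem:PV and pV}, hence having one‑sided limits everywhere, hence Borel; so $\widetilde u|_{\gamma_i([0,\ell_i])}$ is Borel and $\widetilde u$ is $\mathcal H^1$‑measurable.

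\textbf{Construction of $u_i$.} Fix $\eps>0$ and a dilation constant $L\ge 5$. By \eqref{eq:blow up condition liminf}, for each $x\in X$ the set $\{r<\eps:\ \mathcal H^1(B_{in}(x,Lr))\le\mathcal H^1(B(x,Lr))<C_0Lr\}$ has infimum $0$; call such an $r$ \emph{good for $x$}. I would run a greedy packing: repeatedly pick $x_k\notin\bigcup_{j<k}B_{in}(x_j,r_j)$ with $r_k<\eps$ (essentially) the largest good radius for $x_k$, stopping when $\bigcup_k B_{in}(x_k,r_k)=X$. A later center lies outside the earlier (and, by the greedy choice, larger) balls, so $d_{in}(x_j,x_k)\ge\max(r_j,r_k)$ for all $j\neq k$; hence the balls $B_{in}(x_k,r_k/4)$ are pairwise disjoint, and together with the connectedness lower bound $\mathcal H^1(B_{in}(x,r))\ge r$ for $r<\diam_{in}(X)$ (apply the Eilenberg inequality to the $1$‑Lipschitz function $d_{in}(x,\cdot)$) this forces the family to be countable with $r_k\to0$, to cover $X$, and — this is the crucial point — to have uniformly bounded overlap $\#\{k:\ y\in B_{in}(x_k,2r_k)\}\le N=N(C_0)$. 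Choose a partition of unity $\{\phi_k\}$ with $\supp\phi_k\subset B_{in}(x_k,2r_k)$, $0\le\phi_k\le1$, $\sum_k\phi_k\equiv1$, each $\phi_k$ Lipschitz for $d_{in}$ with constant $\le CN/r_k$, and set
\[
u_\eps:=\sum_k a_k\phi_k,\qquad a_k:=\fint_{B_{in}(x_k,r_k)}\widetilde u\,d\mathcal H^1.
\]
Then $u_\eps$ is locally Lipschitz for $d_{in}$, hence continuous along every curve, and $\|u_\eps-\widetilde u\|_{L^1(X)}\le\sum_k\mathcal H^1(B_{in}(x_k,2r_k))\,\osc(\widetilde u,B_{in}(x_k,2r_k))\to0$ as $\eps\to0$, since along each of the countably many covering arcs the BV function $\widetilde u$ has vanishing oscillation over vanishingly short subarcs outside its countable jump set. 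Taking $\eps=1/i$ yields a sequence $u_i\to u$ in $L^1(X)$.

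\textbf{The variation estimate.} Let $g_i(x):=\limsup_{y\to x}|u_i(y)-u_i(x)|/d_{in}(x,y)$ be the pointwise Lipschitz constant of $u_i$ with respect to $d_{in}$. Every arc‑length curve is $1$‑Lipschitz into $(X,d_{in})$, so $g_i$ satisfies the upper gradient inequality $|u_i(\gamma(\ell))-u_i(\gamma(0))|\le\int_\gamma g_i\,ds$ along every curve; taking a partition of $\gamma$ gives $\PV(u_i\circ\gamma)\le\int_\gamma g_i\,ds$, and summing over pairwise disjoint simple curves and invoking the area formula \eqref{eq:area formula 1d} yields $\pV(u_i,X)\le\PV(u_i,X)\le\int_X g_i\,d\mathcal H^1$. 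To bound the last integral I would subtract the constant $\widetilde u(x)$ from $u_i$ (using $\sum_k\phi_k\equiv1$): for $y$ near $x$ only the at most $N$ indices with $x\in\overline{B_{in}(x_k,2r_k)}$ matter, so
\[
g_i(x)\le CN\sum_{k:\,x\in B_{in}(x_k,2r_k)}\frac{|a_k-\widetilde u(x)|}{r_k}\le CN\sum_{k:\,x\in B_{in}(x_k,2r_k)}\frac{\osc(\widetilde u,B_{in}(x_k,2r_k))}{r_k}.
\]
Integrating in $x$, exchanging the sum and the integral, and using $\mathcal H^1(B_{in}(x_k,2r_k))\le\mathcal H^1(B_{in}(x_k,Lr_k))<C_0Lr_k$ gives $\int_X g_i\,d\mathcal H^1\le CNC_0L\sum_k\osc(\widetilde u,B_{in}(x_k,2r_k))$. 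Finally, $\osc(\widetilde u,B_{in}(x_k,2r_k))$ is at most twice the pointwise variation of $\widetilde u$ along curves of length $<4r_k$ inside $B_{in}(x_k,2r_k)$ (join the two extremal points through $x_k$); splitting $\{B_{in}(x_k,2r_k)\}$ into $N$ subfamilies of pairwise disjoint balls (possible by the overlap bound) and applying the definition of $\pV(\widetilde u,X)$ inside each subfamily gives $\sum_k\osc(\widetilde u,B_{in}(x_k,2r_k))\le 2NV$. Hence $\pV(u_i,X)\le C_1V$ with $C_1=C_1(C_0)$, for every $i$, so $\limsup_i\pV(u_i,X)\le C_1V$.

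\textbf{Main obstacle.} The one genuinely delicate step is the covering: hypothesis \eqref{eq:blow up condition liminf} only provides good radii along \emph{some} sequence tending to $0$ at each point, not at a prescribed scale, so extracting, for every $\eps$, a cover of $X$ by inner balls of radius $<\eps$ on which the density is controlled by $C_0$ and whose dilates have overlap bounded solely in terms of $C_0$ requires a careful greedy argument combining the non‑increasing choice of radii, the separation of centers, the density upper bound, and the connectedness lower bound $\mathcal H^1(B_{in}(x,r))\ge r$; verifying that the resulting constant $N$ — and hence $C_1$ — depends on $C_0$ only is the technical heart of the proof. The $L^1$‑convergence and the reduction $\pV\le\PV\le\int_X g_i\,d\mathcal H^1$ are then routine.
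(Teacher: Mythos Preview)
Your approach---discrete convolution via a partition of unity subordinate to a Vitali-type cover---is genuinely different from the paper's, which instead identifies the (countable) jump set of a good representative and mollifies the function locally at each jump point, one at a time.

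The real gap is the bounded-overlap claim $\#\{k:\,y\in B_{in}(x_k,2r_k)\}\le N(C_0)$, which you correctly flag as ``the technical heart'' but do not prove. The standard packing argument---disjoint balls $B_{in}(x_{k_j},r_{k_j}/4)$ inside $B_{in}(x_{k_1},Lr_{k_1})$, then compare measures---uses the density bound at $x_{k_1}$ and the connectedness lower bound to give only $\sum_j r_{k_j}\le C\,r_{k_1}$; this controls a weighted sum of radii, not the count, because nothing forces the later $r_{k_j}$ to be comparable to $r_{k_1}$. Besicovitch-type count bounds need directional (finite-dimensional) structure unavailable here, and doubling-based bounds are unavailable because \eqref{eq:blow up condition liminf} controls the density only along \emph{some} sequence of radii at each point, not at every scale. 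Without bounded overlap you lose control of the Lipschitz constants of the $\phi_k$ (hence of $g_i$), and the final splitting of $\{B_{in}(x_k,2r_k)\}$ into $N$ pairwise-disjoint subfamilies---on which your bound $\sum_k\osc(\widetilde u,B_{in}(x_k,2r_k))\le 2NV$ rests---is unjustified. The $L^1$-convergence $u_\eps\to\widetilde u$ is also more delicate than your one line suggests, since the oscillation of $\widetilde u$ on small inner balls is not uniformly small near jump points.

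The paper avoids all of this. The good representative $v$ has a countable jump set $J_v=\{x_k\}$ with $\sum_k J_v(x_k)\le\pV(v,X)$. At each $x_k$ one invokes \eqref{eq:blow up condition liminf} to pick a \emph{single} small inner ball with controlled density, and replaces $v$ there by its average via a cutoff; this costs at most $(2+4C_0)J_v(x_k)$ in $\pV$. Iterating and summing yields a curve-continuous limit $w$ with $\pV(w,X)\le(3+4C_0)\Var(u,X)$, and letting the modification set shrink gives the approximating sequence. No global covering of $X$ and no overlap estimate are needed---only the density condition at countably many points, one scale apiece.
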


\begin{proof}
By Proposition \ref{prop:good representative} we find
a good representative $v$ of $u$.
Note that $v$ is necessarily bounded; if it were not, we could fix
a point $x_0$ and find points $x_j$ with $|v(x_j)|\to \infty$ as $j\to\infty$,
and join $x_0$ to each $x_j$ with a curve $\gamma_j$
(by the First Rectifiability Theorem), to get
\[
\pV(v,X)\ge |v(\gamma_j(\ell_{\gamma_j}))-v(\gamma_j(0))|
=|v(x_j)-v(x_0)|\to \infty\quad\textrm{as }j\to\infty.
\]
Fix $\eps>0$.
Consider all the points where $v$ is not curve-continuous; such points are
contained in the ``jump sets'', defined for $\kappa>0$ by
\begin{equation}\label{eq:jump sets}
\begin{split}
&J_{v,\kappa}:=\{x\in X: \textrm{ for all }\delta>0\textrm{ there exist pairwise disjoint curves }
\gamma_j\subset B_{in}(x,\delta)\\
&\qquad \qquad \textrm{ such that }\sum_{j}|v(\gamma_j(\ell_{j}))-v(\gamma_j(0))|\ge \kappa\}.
\end{split}
\end{equation}
We can see that each $J_{v,\kappa}$ is finite
(else we would get $\pV(v,X)=\infty$). Let
also $J_v:=\bigcup_{\kappa>0}J_{v,\kappa}$.
For every $x\in J_{v}$, we define
the ``size of the jump''
\[
J_v(x):=\sup\{\kappa>0:\,x\in J_{v,\kappa}\}.
\]
Let $\eps>0$.
The set $J_v$ is at most countable,
and so we find an open set $W_{\eps}\supset J_v$ with $\mathcal H^1(W_{\eps})<\eps$.

Let $x_k$ be an enumeration of all the points in $J_v$, with the jumps $J_v(x_k)$
in decreasing order.
Note first that by choosing suitable short curves near the
jump points, we find that
\begin{equation}\label{eq:pV inside jump set}
\pV(v,X)\ge \sum_{k=1}^{\infty}J_v(x_k).
\end{equation}

We modify $v$ as follows.
We find $r_1>0$ such that $B_1=B_{in}(x_1,r_1)\subset W_\eps$
and, using also \eqref{eq:blow up condition liminf}
(below $\pV(v,2B_1)$ means that all the curves considered are inside
$2B_1=B_{in}(x_1,2r_1)$)
\begin{equation}\label{eq:choice of delta x}
\pV(v,2B_1) \le 2J_v(x_1)\quad\textrm{and}\quad
\frac{\mathcal H^1(2B_{1})}{r_1}<2 C_0.
\end{equation}
Choose a function $\eta_1$ that is $r_{1}^{-1}$-Lipschitz
with respect to $d_{in}$, with $\eta_{1}=1$ in $B_1$
and $\eta_{1}=0$ outside $2B_{1}$.
Define ($v_{B_{1}}$ denotes integral average)
\[
w_1:=v(1-\eta_{1})+\eta_{1}\cdot v_{B_{1}}.
\]
Note that $J_{w_1}\subset J_v\setminus \{x_1\}$ and that
\begin{equation}\label{eq:decrease in jump size}
J_{w_1}(x_k)\le J_{v}(x_k)\quad\textrm{for all }k\ge 2.
\end{equation}

Note also that $w_1=v+\eta_1(v_{B_{1}}-v)$ and
consider $\pV(\eta_1(v_{B_{1}}-v),X)$.
Let $\gamma_j$ be pairwise disjoint simple curves.
Note that $\eta_1(v_{B_{1}}-v)\neq 0$ only inside the ball $2B_{1}$.
By splitting the curves
$\gamma_j$ into subcurves if necessary, we can assume that each of them is
contained inside the ball $2B_{1}$. Then we have
\begin{align*}
&|(\eta_{1}(v_{B_{1}}-v))(\gamma_j(\ell_{j}))-(\eta_{1}(v_{B_{1}}-v))
(\gamma_j(0))|\\
&\qquad\le |\eta_{1}(\gamma_j(\ell_{j}))(v_{B_{1}}-v)(\gamma_j(\ell_{j}))-
\eta_{1}(\gamma_j(\ell_{j}))(v_{B_{1}}-v)(\gamma_j(0))|\\
&\qquad\qquad + |\eta_{1}(\gamma_j(\ell_{j}))(v_{B_{1}}-v)(\gamma_j(0))-
\eta_{1}(\gamma_j(0))(v_{B_{1}}-v)(\gamma_j(0))|\\
&\qquad\le |v(\gamma_j(\ell_{j}))-v(\gamma_j(0))|+
|\eta_{1}(\gamma_j(\ell_{j}))-\eta_{1}(\gamma_j(0))|
\sup_{2B_{1}}|v_{B_{1}}-v|\\
&\qquad\le |v(\gamma_j(\ell_{j}))-v(\gamma_j(0))|+|\eta_{1}(\gamma_j(\ell_{j}))-\eta_{1}(\gamma_j(0))|
\cdot 2J_v(x_1)\quad\textrm{by }\eqref{eq:choice of delta x}\\
&\qquad\le |v(\gamma_j(\ell_{j}))-v(\gamma_j(0))|+r_{1}^{-1}\ell_{\gamma_j}\cdot 2J_v(x_1).
\end{align*}
Thus
\begin{align*}
&\sum_{j}|(\eta_{1}(v_{B_{1}}-v))(\gamma_j(\ell_{j}))-(\eta_{1}(v_{B_{1}}-v))(\gamma_j(0))|\\
&\qquad \le  \sum_{j}|v(\gamma_j(\ell_{j}))-v(\gamma_j(0))|+r_{1}^{-1}\mathcal H^1(2B_{1})2J_v(x_1)
\le  (2+4C_0)J_v(x_1)\quad\textrm{by }\eqref{eq:choice of delta x}
\end{align*}
and so
\[
\pV(\eta_{1}(v_{B_{1}}-v),X)\le (2 +4C_0) J_v(x_1).
\]
Finally, by \eqref{eq:pV subadditivity},
\begin{equation}\label{eq:pV from jump point}
\pV(w_1,X)\le \pV(v,X)+\pV(\eta_{1}(v_{B_{1}}-v),X)
\le \pV(v,X)+(2 +4C_0)J_v(x_1).
\end{equation}

Now we can do this inductively. For each $k\in\N$,
provided that $x_{k+1}\in J_{w_k}$ (if not, we just let $w_{k+1}=w_k$)
we choose
$r_{k+1}>0$ such that
$2B_{k+1}=B_{in}(x_{k+1},2r_{k+1})\subset W_\eps$ and
\[
\pV(w_k,2B_{k+1})\le 2J_{w_k}(x_{k+1})
\quad\textrm{and}\quad
\frac{\mathcal H^1(2B_{k+1})}{r_{k+1}}< 2C_0.
\]
As above, we choose a cutoff function $\eta_{k+1}$ and then define
\[
w_{k+1}:=w_k(1-\eta_{k+1})+\eta_{k+1}\cdot (w_k)_{B_{k+1}}.
\]
We claim that for all $k\in \N$, we have
\[
\pV(w_{k},X)\le \pV(v,X)+(2+4C_0)\sum_{m=1}^{k}J_{v}(x_{m})
\]
and that
\begin{equation}\label{eq:decrease in jump size 2}
J_{w_k}(x_m)\le J_{v}(x_m)\quad\textrm{for all }m\ge k+1.
\end{equation}
We have shown these to be true for $k=1$
(recall also \eqref{eq:decrease in jump size}), and \eqref{eq:decrease in jump size 2}
 is easily seen to hold with $k$ replaced by $k+1$. Moreover,
\begin{align*}
\pV(w_{k+1},X)
&\le \pV(w_{k},X)+(2+4C_0)J_{w_k}(x_{k+1})\quad\textrm{(just as in }\eqref{eq:pV from jump point})\\
&\le \pV(v,X)+(2+4C_0)\sum_{m=1}^{k}J_{v}(x_{m})+(2+4C_0)J_{w_k}(x_{k+1})\quad\textrm{by ind. hyp.}\\
&\le \pV(v,X)+(2+4C_0)\sum_{m=1}^{k+1}J_{v}(x_{m})\quad\textrm{by }\eqref{eq:decrease in jump size 2}.
\end{align*}
Then let $w:=\lim_{k\to\infty}w_k$. Note that the convergence
is uniform, in particular pointwise, since
\[
|w_{k+1}-w_k|\le 2J_v(x_{k+1})
\]
and recalling \eqref{eq:pV inside jump set}.
Now by Proposition \ref{prop:lower semicontinuity}
and \eqref{eq:pV inside jump set},
\begin{align*}
\pV(w,X)
&\le \liminf_{k\to\infty}\pV(w_k,X)\\
&\le \pV(v,X)+(2+4C_0)\sum_{k=1}^{\infty}J_{v}(x_{k})
\le (3+4C_0)\pV(v,X).
\end{align*}
Since $w_k$ has jump discontinuities on curves with jump size at most
$J_v(x_{k+1})\to 0$ as $k\to\infty$, and since $w_k\to w$ uniformly,
we see that $w$ is curve-continuous.

Recall that $w$ also depends on $\eps>0$, with $w=v$ outside
the open set $W_{\eps}$ with $\mathcal H^1(W_{\eps})<\eps$.
Recall also that $v$ is bounded, and furthermore it is easy to check
from the construction that $\inf_X v\le w\le \sup_X v$.
Choosing $\eps=1/i$ and letting
$u_i$ be the corresponding curve-continuous function $w$, we now get
$u_i\to u$ a.e., and so $u$ is $\mathcal H^1$-measurable
by Lemma \ref{lem:measurability}, and then $u_i\to u$ in $L^1(X)$ and
\[
\limsup_{i\to\infty}\pV(u_i,X)\le (3+4C_0)\pV(v,X)=(3+4C_0)\Var(u,X).
\]
\end{proof}

\subsubsection{Coarea inequality and the conclusion}

In the last part, we will show a coarea inequality and prove the implication from sets with finite pointwise variation to finite total variation. First we show the following coarea inequality.

\begin{proposition}\label{prop:coarea}
Suppose there exists a constant $C_0$ such that for all $x\in X$
\[
\liminf_{r\to 0}\frac{\mathcal H^1(B(x,r))}{r}<C_0
\]
holds. Suppose $\Var(u,X)<\infty$. Then
\[
C_1\Var(u,X)\ge \int_{\R}^{*}\Var(\ch_{\{u>t\}},X)\,dt.
\]
\end{proposition}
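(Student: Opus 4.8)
The plan is to reduce everything to curve-continuous functions via Proposition \ref{prop:approximation by continuous functions} and then prove the coarea inequality directly at the level of the pointwise variation $\pV$ using the truncation machinery of Lemmas \ref{lem:truncation lemma} and \ref{lem:level sets}. First I would apply Proposition \ref{prop:approximation by continuous functions} to obtain curve-continuous functions $u_i\to u$ in $L^1(X)$ with $\limsup_i \pV(u_i,X)\le C_1\Var(u,X)$. For each fixed curve-continuous $v:=u_i$ with $\pV(v,X)<\infty$, I claim the telescoping inequality
\[
\int_{\R}\pV(\ch_{\{v>t\}},X)\,dt\le \pV(v,X).
\]
To see this, partition $\R$ into intervals of length $r$ and iterate Lemma \ref{lem:truncation lemma}: for any $a<b$ with $b-a$ a multiple of $r$ one gets $\sum_{k}\pV(v_{a+kr,\,a+(k+1)r},X)\le \pV(v_b,X)-\pV(v_a,X)\le \pV(v,X)$ (here I use that $v$ is bounded, so $v_a$ and $v_b$ coincide with constants for $a$ small and $b$ large, killing the boundary terms). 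Dividing by $r$ and letting $r\to0$, Fatou's lemma together with Lemma \ref{lem:level sets} yields $\int_\R \pV(\ch_{\{v>t\}},X)\,dt\le \liminf_{r\to0}\frac1r\sum_k \pV(v_{\cdot,\cdot+r},X)\le \pV(v,X)$, after checking the Riemann-sum-to-integral passage is legitimate for the lower semicontinuous integrand (a routine monotone/Fatou argument, grouping the partition points).

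Next I would pass from $u_i$ back to $u$ using the lower semicontinuity from Proposition \ref{prop:lower semicontinuity}. Passing to a subsequence so that $u_i\to u$ a.e. and, by a further diagonal/subsequence argument applied to the slicing, arranging $\ch_{\{u_i>t\}}\to\ch_{\{u>t\}}$ in $L^1(X)$ for a.e. $t\in\R$ (this holds for every $t$ that is not an atom of the pushforward of $\mathcal H^1$ under $u$, which excludes only countably many $t$), Proposition \ref{prop:lower semicontinuity} gives $\Var(\ch_{\{u>t\}},X)\le \liminf_i \Var(\ch_{\{u_i>t\}},X)$ for a.e.\ $t$. Combining with $\Var(\ch_{\{u_i>t\}},X)\le \pV(\ch_{\{u_i>t\}},X)$ and the previous paragraph, an application of Fatou's lemma in the $t$-variable gives
\[
\int_\R^{*}\Var(\ch_{\{u>t\}},X)\,dt\le \int_\R \liminf_{i\to\infty}\pV(\ch_{\{u_i>t\}},X)\,dt\le \liminf_{i\to\infty}\int_\R \pV(\ch_{\{u_i>t\}},X)\,dt\le \liminf_{i\to\infty}\pV(u_i,X)\le C_1\Var(u,X),
\]
which is the desired conclusion. (The upper integral on the left causes no trouble since each integrand in the chain is dominated by a nonnegative measurable function whose integral is finite.)

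The main obstacle I expect is the passage from the discrete sum $\frac1r\sum_k \pV(v_{a+kr,\,a+(k+1)r},X)$ to the integral $\int_\R\pV(\ch_{\{v>t\}},X)\,dt$: Lemma \ref{lem:level sets} only gives a pointwise $\liminf$ bound $\pV(\ch_{\{v>t\}},X)\le \liminf_{r\to0}\frac1r\pV(v_{t,t+r},X)$, and one must interchange this $\liminf$ with the sum/integral. The clean way is to fix $r$, bound $\int_\R \frac1r\pV(v_{t,t+r},X)\,dt$ from below by a Riemann-type sum over a grid of mesh $r$ (monotonicity of $\pV$ under truncation makes $t\mapsto\pV(v_{t,t+r},X)$ behave well enough), show this sum is itself $\le \pV(v,X)$ by telescoping Lemma \ref{lem:truncation lemma}, and then apply Fatou as $r\to0$ to the integrand $\frac1r\pV(v_{t,t+r},X)$, whose $\liminf$ dominates $\pV(\ch_{\{v>t\}},X)$ by Lemma \ref{lem:level sets}. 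A secondary technical point is measurability of $t\mapsto\pV(\ch_{\{v>t\}},X)$, which is why the upper integral is retained throughout and why Fatou (rather than monotone convergence) is the tool of choice.
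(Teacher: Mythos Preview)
Your proposal is correct and follows essentially the same route as the paper: reduce to curve-continuous functions via Proposition \ref{prop:approximation by continuous functions}, prove the coarea inequality at the level of $\pV$ for such functions using Lemmas \ref{lem:truncation lemma} and \ref{lem:level sets}, and then pass back to $u$ via lower semicontinuity and Fatou. The only cosmetic difference is in the curve-continuous step: the paper introduces the increasing function $m(t):=\pV(u_t,X)$ and invokes Lebesgue's theorem $\int m'\,dt\le m(\infty)-m(-\infty)=\pV(u,X)$ directly, which absorbs your telescoping/Riemann-sum/Fatou manoeuvre into a single classical fact and avoids the awkward ``bound from below by a Riemann sum'' passage (note that you actually need to bound $\int \tfrac1r\pV(v_{t,t+r},X)\,dt$ from \emph{above}, which follows immediately from $\pV(v_{t,t+r},X)\le m(t+r)-m(t)$ and $\tfrac1r\int(m(t+r)-m(t))\,dt=\pV(v,X)$).
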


Note that we use an upper integral since measurability is not clear.

\begin{proof}
First assume that $u$ is curve-continuous and that
$\pV(u,X)<\infty$. Define (recall that $u_t=\min\{t,u\}$)
\[
m(t):=\pV(u_t,X),\quad t\in\R.
\]
Then $m$ is an increasing function and so
\[
\pV(u,X)\ge \int_{-\infty}^{\infty}m'(t)\,dt.
\]
Let $\eps>0$. Now by Lemma \ref{lem:truncation lemma},
\begin{align*}
m(t+r)-m(t)\ge \pV(u_{t,t+r},X).
\end{align*}
Furthermore, Lemma \ref{lem:level sets} implies that
\[
\liminf_{r\to 0}\frac{m(t+r)-m(t)}{r}
\ge\liminf_{r\to 0}\frac{\pV(u_{t,t+r},X)}{r}
\ge \pV(\ch_{\{u>t\}},X).
\]
Thus we have
\begin{equation}\label{eq:coarea ineq for continuous function}
\pV(u,X)\ge \int_{\R}^{*}\pV(\ch_{\{u>t\}},X)\,dt\ge
\int_{\R}^{*}\Var(\ch_{\{u>t\}},X)\,dt.
\end{equation}
	
	Now for a general function $u$ on $X$ with $\Var(u,X)<\infty$,
	by Proposition \ref{prop:approximation by continuous functions}
	we find a sequence of curve-continuous functions $u_i$ with
	$u_i\to u$ in $L^1(X)$ and
	\[
	\limsup_{i\to \infty}\pV(u_i,X)\le C_1\Var(u,X).
	\]
	For every $x\in X$,
	\[
	\int_{-\infty}^{\infty}|\ch_{\{u_i>t\}}(x)-\ch_{\{u>t\}}(x)|\,dt
	=\int_{\min\{u_i(x),u(x)\}}^{\max\{u_i(x),u(x)\}}\,dt
	=|u_i(x)-u(x)|.
	\]
	Hence by Fubini's theorem
	(recall the measurability statement of
	Proposition \ref{prop:approximation by continuous functions})
	\begin{align*}
	\int_{X}|u_i-u|\,d\mathcal H^1
	&= \int_{X}\int_{-\infty}^{\infty}|\ch_{\{u_i>t\}}(x)
	-\ch_{\{u>t\}}(x)|\,dt\,d\mathcal H^1(x)\\
	&= \int_{-\infty}^{\infty}\int_{X}|\ch_{\{u_i>t\}}(x)
	-\ch_{\{u>t\}}(x)|\,d\mathcal H^1(x)\,dt.
	\end{align*}
	Thus $\Vert \ch_{\{u_i>t\}}-\ch_{\{u>t\}}\Vert_{L^1(X)}\to 0$
	in $L^1(\R)$
	and so we can find a subsequence of $u_i$ (not relabeled) such that
	\[
	\Vert \ch_{\{u_i>t\}}-\ch_{\{u>t\}}\Vert_{L^1(X)}\to 0\quad\textrm{for a.e. }
	t\in\R.
	\]
	Then for such $t$, by the lower semicontinuity of
	Proposition \ref{prop:lower semicontinuity},
	\[
	\Var(\ch_{\{u>t\}},X)\le\liminf_{i\to\infty}\Var(\ch_{\{u_i>t\}},X).
	\]
	We find measurable functions $h_i\ge \ch_{\{u_i>t\}}$ on $\R$ such that
	\[
	\liminf_{i\to\infty}\int_{-\infty}^{\infty}h_i(t)\,dt
	=\liminf_{i\to\infty}\int_{\R}^*\Var(\ch_{\{u_i>t\}},X)\,dt.
	\]
	Then by Fatou's lemma
	\begin{align*}
	\int_{\R}^{*}\Var(\ch_{\{u>t\}},X)\,dt
	& \le \int_{\R}^{*}\liminf_{i\to\infty}\Var(\ch_{\{u_i>t\}},X)\,dt\\
	& \le \int_{\R}\liminf_{i\to\infty}h_i(t)\,dt\\
	& \le \liminf_{i\to\infty}\int_{\R}h_i(t)\,dt\\
	& = \liminf_{i\to\infty}\int_{\R}^{*}\Var(\ch_{\{u_i>t\}},X)\,dt\\
	& \le \liminf_{i\to\infty}\pV(u_i,X)\quad\textrm{by }\eqref{eq:coarea ineq for continuous function}\\
	&\le C_1\Var(u,X).
	\end{align*}
	
\end{proof}

Due to the above coarea inequality, it will suffice to consider characteristic
functions $u=\ch_E$ for $E\subset X$.

\begin{proposition}\label{prop:only if direction for E}
Suppose there exists a constant $C_0$ such that for all $x\in X$
\[
\liminf_{r\to 0}\frac{\mathcal H^1(B(x,r))}{r}<C_0
\]
holds. Let $E\subset X$.
Then $\Vert D\ch_E\Vert(X)\le C_0\Var (\ch_E,X)$.
\end{proposition}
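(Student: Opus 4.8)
The plan is to reduce the statement to a \emph{two-valued} good representative of $\ch_E$, observe that such a function can have only finitely many jump points, and then modify it only inside tiny inner-balls around those points, one ball per jump point, which is precisely where the density hypothesis \eqref{eq:blow up condition liminf} gets spent. If $\Var(\ch_E,X)=\infty$ there is nothing to prove, so assume it is finite. Since $\Var$ depends only on the a.e.\ equivalence class, one checks that $\Var(\ch_E,X)=\inf\{\pV_D(\ch_E,X):\mathcal H^1(X\setminus D)=0\}$: for ``$\le$'' restrict a competitor in the definition of $\Var$ to the set where it equals $\ch_E$, and for ``$\ge$'' apply Proposition~\ref{prop:extension from a set of full measure}. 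Intersecting a minimizing sequence of such sets $D$ gives a single full-measure $D_0$ with $\pV_{D_0}(\ch_E,X)=\Var(\ch_E,X)$; applying Proposition~\ref{prop:extension from a set of full measure} to $\ch_E$ and $D_0$ produces a function $w$ with $w=\ch_E$ a.e.\ and $\pV(w,X)=\Var(\ch_E,X)$. Since in that construction every value $w(x)$ is a limit of numbers $\ch_E(\gamma_x(t))\in\{0,1\}$, the function $w$ is $\{0,1\}$-valued; write $w=\ch_F$, so $\ch_F$ is a two-valued good representative of $\ch_E$.

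Next I record the structure of $\ch_F$. Because $\ch_F$ takes only the values $0$ and $1$, for any finite family of pairwise disjoint curves the sum $\sum_j|\ch_F(\gamma_j(\ell_j))-\ch_F(\gamma_j(0))|$ is a nonnegative integer; hence a point lying in some jump set $J_{\ch_F,\kappa}$ already lies in $J_{\ch_F,1}$, so $J_{\ch_F}(x)\ge 1$ for every $x$ in the jump set $J_{\ch_F}$. By \eqref{eq:pV inside jump set}, $\sum_{x\in J_{\ch_F}}J_{\ch_F}(x)\le\pV(\ch_F,X)<\infty$, which forces $J_{\ch_F}=\{x_1,\dots,x_M\}$ to be finite with $M\le\pV(\ch_F,X)=\Var(\ch_E,X)$. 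Moreover $\ch_F$ is locally constant, with respect to the inner metric $d_{in}$ and hence along every curve, on $X\setminus\{x_1,\dots,x_M\}$: if $x\notin J_{\ch_F}$ then for some $\delta>0$ every simple curve inside $B_{in}(x,\delta)$ has equal endpoint values, and concatenating two short curves through $x$ shows $\ch_F$ is constant on $B_{in}(x,\delta/2)$.

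For the construction, fix $i\in\N$. For each $k$ the hypothesis $\liminf_{r\to0}\mathcal H^1(B(x_k,r))/r<C_0$ lets me choose $\epsilon_k<1/i$ so small that the balls $B_{in}(x_k,\epsilon_k)$ are pairwise disjoint, contain no $x_l$ with $l\ne k$, and satisfy $\mathcal H^1(B(x_k,\epsilon_k))<C_0\epsilon_k$. Let $A_k:=\bigl(B_{in}(x_k,\epsilon_k)\cap\{\ch_F=1\}\bigr)\cup\{x_k\}$, and define $u_i:=\ch_F$ on $X\setminus\bigcup_kA_k$ and $u_i(x):=d_{in}(x,x_k)/\epsilon_k$ for $x\in A_k$. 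Then $u_i$ is continuous along every curve: on each $\overline{A_k}$ it is $\epsilon_k^{-1}$-Lipschitz for $d_{in}$ and equals $1$ on the bounding inner-sphere, matching $\ch_F$ there, while off $\bigcup_kA_k$ (a set containing no jump point, since $x_k\in A_k$) it is locally constant. Also $g_i:=\sum_k\epsilon_k^{-1}\ch_{A_k}$ is a (weak) upper gradient of $u_i$: along any curve $\gamma$, the function $u_i\circ\gamma$ is constant on the part of the parameter interval mapped outside $\bigcup_k\overline{A_k}$ and $\epsilon_k^{-1}$-Lipschitz on the part mapped into $\overline{A_k}$, so $|u_i(\gamma(\ell_\gamma))-u_i(\gamma(0))|\le\int_\gamma g_i\,ds$. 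Since the $A_k\subset B_{in}(x_k,\epsilon_k)\subset B(x_k,\epsilon_k)$ are pairwise disjoint,
\[
\int_X g_i\,d\mathcal H^1=\sum_{k=1}^{M}\epsilon_k^{-1}\mathcal H^1(A_k)\le\sum_{k=1}^{M}\epsilon_k^{-1}\mathcal H^1(B(x_k,\epsilon_k))<MC_0\le C_0\Var(\ch_E,X),
\]
while $\|u_i-\ch_F\|_{L^1(X)}\le\mathcal H^1\bigl(\bigcup_kA_k\bigr)\le\sum_k C_0\epsilon_k<MC_0/i\to0$, so $u_i\to\ch_E$ in $L^1(X)$. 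Therefore $\|D\ch_E\|(X)\le\liminf_i\int_X g_i\,d\mathcal H^1\le C_0\Var(\ch_E,X)$.

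The step I expect to be most delicate is the passage to the two-valued good representative together with the elementary but slightly fussy verifications near the finitely many jump points — that $J_{\ch_F}$ really is finite, that $\ch_F$ is locally constant off it in a way compatible with curves, and that the modified functions $u_i$ are curve-continuous with the stated upper gradient. Once those structural facts are secured, the density hypothesis enters exactly once per jump point and the constant $C_0\Var(\ch_E,X)$ comes out directly from the displayed estimate.
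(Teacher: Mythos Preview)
Your proof is correct and follows essentially the same strategy as the paper: pass to a $\{0,1\}$-valued good representative $\ch_F$, observe that its jump set (the paper's ``curve boundary'' $\partial^c F$) is finite with at most $\Var(\ch_E,X)$ points, and then mollify $\ch_F$ inside small balls around those points chosen via the density hypothesis. The only cosmetic difference is that you pull $\ch_F$ \emph{down} to $0$ near each $x_k$ (via $d_{in}(\cdot,x_k)/\epsilon_k$, i.e.\ $u_i=\min\{\ch_F,\phi_1,\dots,\phi_M\}$) whereas the paper pushes it \emph{up} to $1$ (via $u_i=\max\{\ch_F,\eta_{1,i},\dots,\eta_{N,i}\}$); your $g_i=\sum_k\epsilon_k^{-1}\ch_{A_k}$ works, but since $A_k$ need not be Borel and your one-line Lipschitz justification really controls $\ch_{\overline{A_k}}$, the cleanest fix is to enlarge the support to the open ball $B(x_k,\epsilon_k)$, which is exactly what the paper does and leaves the final estimate unchanged.
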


\begin{proof}
We can assume that $\Var (\ch_E,X)<\infty$.
By Proposition \ref{prop:good representative} we find a good representative
$v$ of $\ch_E$, so that $\pV(v,X)=\Var(\ch_E,X)$.
Let $D:=\{x\in X:\,v(x)\in\{0,1\}\}$, so that $\mathcal H^1(X\setminus D)=0$.
By Proposition \ref{prop:extension from a set of full measure} and its proof,
we know that there is a function $v_e$ on $X$ with $v_e=v$ on $D$,
taking only the values $0,1$, with
$\pV(v_e,X)=\pV_D(v,X)\le \pV(v,X)$ and so in fact $\pV(v_e,X)=\Var(\ch_E,X)$.
In conclusion, we can take the good representative to be
$\ch_F$ for $F\subset X$, and then
$\pV(\ch_F,X)=\Var(\ch_E,X)$.

Recall the definition of the jump set from \eqref{eq:jump sets};
it is not difficult to see that now
\begin{align*}
&J_{\ch_F}=\{x\in X:\, \textrm{for all }\delta>0\,\textrm{there exists a curve } \gamma\subset B_{in}(x,\delta)\\
&\qquad\qquad\textrm{ that intersects both }
F\textrm{ and }X\setminus F\}.
\end{align*}
We call this the ``curve boundary'' $\partial^c F:=J_{\ch_F}$.
Clearly any curve intersecting both $F$
and $X\setminus F$ needs to intersect also $\partial^c F$.
Now if $\mathcal H^0(\partial^c F)=\infty$, then
we can pick arbitrarily many disjoint
curves $\gamma\colon [0,\ell]\to X$ with  $|\ch_F(\gamma(\ell))-\ch_F(\gamma(0))|=1$
and thus $\pV(\ch_F,X)=\infty$.
But since $\pV(\ch_F,X)<\infty$, actually
$\mathcal H^0(\partial^c F)<\infty$.
In other words, $\partial^c F=\{x_1,\ldots,x_N\}$ with $N\le \pV(\ch_F,X)=\Var(\ch_E,X)$.

Take a sequence $\delta_i\searrow 0$ such that the balls
$B(x_j,\delta_1)$, $j=1,\ldots,N$, are pairwise disjoint. 
Fix $i\in\N$.
By \eqref{eq:blow up condition liminf}, for each $j=1,\ldots,N$ we find
$\delta_{j,i}\in (0,\delta_i)$ such that
\begin{equation}\label{eq:choice of delta ji}
\frac{\mathcal H^1(B(x_j,\delta_{j,i}))}{\delta_{j,i}}< C_0.
\end{equation}
For each $j=1,\ldots,N$, let $\eta_{j,i}$ be a
$1/\delta_{j,i}$-Lipschitz function with
$\eta_{j,i}(x_{j})=1$ and $\eta_{j,i}=0$ outside $B(x_j,\delta_{j,i})$.
Define
\[
v_i:=\max\left\{\eta_{1,i},\ldots,\eta_{N,i}\right\}\quad\textrm{and}\quad
u_i:=\max\left\{\ch_{F},\eta_{1,i},\ldots,\eta_{N,i}\right\}.
\]
Let
\[
g_i:=\sum_{j=1}^N \frac{\ch_{B(x_j,\delta_{j,i})}}{\delta_{j,i}}.
\]
Note that since the pointwise Lipschitz constant
\eqref{eq:pointwise Lipschitz constant} is an upper gradient
\cite[Proposition 1.14]{BB}, and by
\cite[Corollary 2.21]{BB}, we know that $\ch_{B(x_j,\delta_{j,i})}/\delta_{j,i}$
is a $1$-weak upper gradient of $\eta_{j,i}$
(recall Definition \ref{def:upper gradient}).
Then $g_i$ is a $1$-weak upper gradient of $v_i$.

Then we can verify that $g_i$ is a $1$-weak upper gradient of $u_i$.
For this we need to check three cases for a curve
$\gamma\colon [0,\ell]\to X$ with end points
$\gamma(0)=x$ and $\gamma(\ell)=y$.
We can assume that the pair $(v_i,g_i)$ satisfies the upper gradient
inequality on the curve $\gamma$ as well as all of its subcurves
\cite[Lemma 1.40]{BB}.
The first case is $x,y\in F$, where
\[
|u_i(x)-u_i(y)|=0\le \int_{\gamma}g_i\,ds.
\]
The second case is $x,y\in X\setminus F$. Here
\[
|u_i(x)-u_i(y)|=|v_i(x)-v_i(y)|\le \int_{\gamma}g_i\,ds.
\]
The third case is $x\in F$ and $y\in X\setminus F$.
As mentioned before, $\gamma$ now necessarily intersects $\partial^c F$.
Thus there is some $t\in[0,\ell]$ such that 
$\gamma(t)\in\partial^c F$, and thus $\gamma(t)=x_j$
for some $j$. Note that $u_i(\gamma(0))=1$,
$u_i(\gamma(t))=v_i(\gamma(t))=1$, and $u_i(\gamma(\ell))=v_i(\gamma(\ell))$.
It follows that
\begin{align*}
|u_i(\gamma(\ell))-u_i(\gamma(0))|&\le |u_i(\gamma(\ell))-u_i(\gamma(t))|
+|u_i(\gamma(t))-u_i(\gamma(0))|
\\
&=|v_i(\gamma(\ell))-v_i(\gamma(t))|\le \int_\gamma g_i\, ds.
\end{align*}
In conclusion, $g_i$ is a $1$-weak upper gradient of $u_i$.
It is easy to see that also $u_i\to \ch_E$ in $L^1(X)$.
Now we have, using \eqref{eq:choice of delta ji},
\[
\Vert D\ch_E\Vert(X)
\le \liminf_{i\to\infty}\int_X g_i\,d\mathcal H^1
\le \liminf_{i\to\infty}\sum_{j=1}^N \frac{\mathcal H^1(B(x_j,\delta_{j,i}))}{\delta_{j,i}}
\le C_0N\le C_0 \Var(\ch_E,X).
\]
\end{proof}

\begin{proposition}\label{prop:only if direction}
Suppose there exists a constant $C_0$ such that for all $x\in X$
\[
\liminf_{r\to 0}\frac{\mathcal H^1(B(x,r))}{r}<C_0
\]
holds. Suppose $\Var (u,X)<\infty$.
Then $\Vert Du\Vert(X)\le C\Var (u,X)$.
\end{proposition}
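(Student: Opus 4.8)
The plan is to combine three ingredients already established: the coarea inequality of Proposition \ref{prop:coarea}, the estimate for characteristic functions of Proposition \ref{prop:only if direction for E}, and the BV coarea formula \eqref{eq:coarea}. The strategy is to slice $u$ into its superlevel sets $\{u>t\}$, apply the perimeter bound to each slice, integrate in $t$, and then reassemble via \eqref{eq:coarea}.

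First I would check that \eqref{eq:coarea} is applicable to $u$ on $\Omega=X$. Since $\Var(u,X)<\infty$, Proposition \ref{prop:good representative} furnishes a good representative of $u$, which as observed in the proof of Proposition \ref{prop:approximation by continuous functions} is bounded; because $\mathcal H^1(X)<\infty$ this gives $u\in L^1(X)$, and Proposition \ref{prop:approximation by continuous functions} also shows $u$ is $\mathcal H^1$-measurable, so the coarea formula \eqref{eq:coarea} indeed applies to $u$ on $X$. Next, for every $t\in\R$ I apply Proposition \ref{prop:only if direction for E} with $E=\{u>t\}$ to get the pointwise bound
\[
\Vert D\ch_{\{u>t\}}\Vert(X)\le C_0\,\Var(\ch_{\{u>t\}},X).
\]
Integrating this in $t$ (the positive constant $C_0$ commutes with the upper integral) and invoking Proposition \ref{prop:coarea} yields
\[
\int_{\R}^{*}\Vert D\ch_{\{u>t\}}\Vert(X)\,dt\le C_0\int_{\R}^{*}\Var(\ch_{\{u>t\}},X)\,dt\le C_0C_1\,\Var(u,X)<\infty .
\]
Since the right-hand side of \eqref{eq:coarea} is now finite, the coarea formula gives
\[
\Vert Du\Vert(X)=\int_{\R}^{*}\Vert D\ch_{\{u>t\}}\Vert(X)\,dt\le C_0C_1\,\Var(u,X),
\]
which is the assertion with $C:=C_0C_1$, a constant depending only on $C_0$ (hence only on $X$); this also completes the proof of part (2) of Theorem \ref{thm:equivalence}.

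There is no real obstacle left here: all the genuine work was done in Propositions \ref{prop:coarea} and \ref{prop:only if direction for E}. The only points needing a little care are the bookkeeping with the upper integrals — legitimate because $C_0>0$ is a constant and because \eqref{eq:coarea} is phrased precisely so that finiteness of one side transfers to the other, giving measurability and equality — and the preliminary check that $u\in L^1(X)$ so that \eqref{eq:coarea} may be invoked at all.
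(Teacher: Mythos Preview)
Your proposal is correct and follows essentially the same approach as the paper: verify $u\in L^1(X)$ via measurability and boundedness, then chain the BV coarea formula \eqref{eq:coarea}, Proposition \ref{prop:only if direction for E}, and Proposition \ref{prop:coarea} to obtain $\Vert Du\Vert(X)\le C_0C_1\Var(u,X)$. Your write-up is in fact a bit more explicit about the upper-integral bookkeeping than the paper's, but the argument is the same.
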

\begin{proof}
From $\Var (u,X)<\infty$ it follows that $u$ is essentially
bounded, and $u$ is $\mathcal H^1$-measurable by
Proposition \ref{prop:approximation by continuous functions}.
Combined with the fact that
$\mathcal{H}^1(X)<\infty$, we get $u\in L^1(X)$.
By the BV coarea formula \eqref{eq:coarea},
Proposition \ref{prop:only if direction for E}, and the coarea inequality of
Proposition \ref{prop:coarea}, it follows that
\[
\Vert Du\Vert(X)=\int_{\R}^*\Vert D\ch_{\{u>t\}}\Vert(X)\,dt
\le C_0\int_{\R}^*\Var(\ch_{\{u>t\}},X)\,dt\le C_0 C_1\Var(u,X).
\]
\end{proof}

Theorem  \ref{thm:equivalence} follows by combining Proposition \ref{prop:if direction} and Proposition \ref{prop:only if direction}.

\section{Federer's characterization of sets of finite perimeter}\label{sec:Federer}

Let us briefly consider a more general metric space $(X,d,\mu)$,
where $\mu$ is a Radon measure.
The \emph{codimension one Hausdorff measure} is defined for any set $A\subset X$ by
\[
\mathcal{H}(A):=\lim_{R\rightarrow 0}\mathcal{H}_{R}(A)
\]
with
\[
\mathcal{H}_{R}(A):=\inf\left\{ \sum_{i\in I}
\frac{\mu(B(x_{i},r_{i}))}{r_{i}}:\,A\subset\bigcup_{i\in I}B(x_{i},r_{i}),\,r_{i}\le R\right\},
\]
where $I\subset \N$ is a finite or countable index set.
Note that in an Ahlfors one-regular space, $\mathcal H$ is
comparable to $\mathcal H^0$.

Given any set $E\subset X$, the
measure-theoretic boundary $\partial^{*}E$ is the set of points
$x\in X$ for which
\[
\limsup_{r\to 0}\frac{\mu(B(x,r)\cap E)}{\mu(B(x,r))}>0\quad
\textrm{and}\quad\limsup_{r\to 0}\frac{\mu(B(x,r)\setminus E)}{\mu(B(x,r))}>0.
\]

Recall from the Introduction that if $(X,d,\mu)$ is a complete metric space
such that $\mu$ is doubling and the space supports a $1$-Poincar\'e inequality,
then the condition $\mathcal H(\partial^*E)<\infty$ for a measurable
set $E\subset X$ implies that $\Vert D\ch_E\Vert(X)<\infty$. This is the ``if''
direction of Federer's characterization of sets of finite perimeter.

	Define a space as a subset of $\R^2$ as follows.
	First define for each $j\in\N$
	\[
	A_j:=\bigcup_{k=0}^{2^j-1} I_k^j,
	\]
	where
	$$I_k^j:=\left\{\left(t\cos\left(\frac{k\pi}{2^j}\right),
	t\sin\left(\frac{k\pi}{2^j}\right)\right)\in \mathbb{R}^2:\,
	t\in [-1,1]\right\}$$
	is a line segment passing through the origin with length
	$\mathcal H^1(I_k^j)= 2$. The angle between $I_k^j$ and the positive $x$-axis is $\frac{k\pi}{2^j}$ and the angle between $I_k^j$ and $I_{k-1}^j$ is $\frac{\pi}{2^j}$.
	For any set $A\subset \R^2$ and $a>0$, we let
	\[
	aA:=\{(ax, ay):\,(x,y)\in A\}.
	\]
	Then consider $\widetilde{A}_j:=2^{-2j-1}A_j$ for each $j\in\N$.  
	Note that $\widetilde{A}_j$ is a collection of $2^j$ line segments $\widetilde{I}_k^j$ with length
	$\mathcal H^1(\widetilde{I}_k^j)= 2^{-2j}$.
	
	Define
	\begin{equation}\label{set}
	X:=\bigcup_{j=1}^{\infty}\widetilde{A}_j.
	\end{equation}
	
We first show that the doubling condition is essential in the
``if'' direction of Federer's characterization.

	\begin{example}\label{ex:space with Poincare}
	Equip the set $X$ in \eqref{set} with the geodesic metric and the measure
	$\mathcal H^1$. We have
	\[
	\mathcal H^1(X)\le \sum_{j=1}^{\infty}2^{j} \mathcal H^1(\widetilde{I}_k^j)
	= \sum_{j=1}^{\infty}2^{-j} =1.
	\]
	Clearly, the density
	upper bound condition \eqref{eq:blow up condition liminf} no longer
	holds at $0$.
	Moreover, $\mathcal H^1$ is not doubling: the doubling condition fails
	when we choose
	points $x$ close to $0$ with $0\in B(x,2r)$ and $0\notin B(x,r)$. 
	
	Now we show that this space does support a
	$1$-Poincar\'e inequality.
	First consider a ball $B(0,r)$.
	Suppose $u$ is a function on $X$ with $u(0)=0$ and let $g$ be an
	upper gradient of $u$.
	Every $x\in B(0,r)$ is connected to $0$ by a line segment $I$.
	We have
	\[
	\int_{I}g\,d\mathcal H^1\ge |u(x)-u(0)|=|u(x)|.
	\]
	Note that $B(0,r)$ consists of countably many line segments
	$\{I_j\}_{j=1}^{\infty}$ that have the origin as one end point
	(some may be half-open).
	By the above, we have 
	\[
	|u(x)|\le \int_{I_j}g\,d\mathcal H^1\quad\textrm{for every }x\in I_j.
	\]
	Thus
	\begin{align*}
	\int_{B(0,r)}|u|\,d\mathcal H^1
	=\sum_{j=1}^{\infty}\int_{I_j}|u|\,d\mathcal H^1
	&\le \sum_{j=1}^{\infty}\Big(\mathcal H^1(I_j)\int_{I_j}g\,
	d\mathcal H^1\Big)\\
	&\le r\int_{B(0,r)}g\,d\mathcal H^1\quad\textrm{since }
	\mathcal H^1(I_j)\le r\textrm{ for all }j\in\N.
	\end{align*}
	
	Now consider a general ball $B(x,r)$ and a function $u\in L^1(X)$ with
	upper gradient $g$. If $B(x,r)$ is contained in only one line segment,
	the Poincar\'e inequality obviously holds since it holds in $\R$.
	So we can assume that $0\in B(x,r)$. We can also assume that
	$\int_{B(0,2r)}g\,d\mathcal H^1<\infty$ and then $u$ is a
	bounded
	function in $B(0,2r)$. Thus we can assume that $u(0)=0$.
	Now
	\begin{align*}
	\int_{B(x,r)}|u-u_{B(x,r)}|\,d\mathcal H^1
	&\le 2\int_{B(x,r)}|u|\,d\mathcal H^1\quad\textrm{(see e.g. \cite[Lemma 4.17]{BB})}\\
	&\le 2\int_{B(0,2r)}|u|\,d\mathcal H^1\\
	&\le 4r\int_{B(0,2r)}g\,d\mathcal H^1\\
	&\le 4r\int_{B(x,3r)}g\,d\mathcal H^1.
	\end{align*}
	Thus a $1$-Poincar\'e inequality holds with $C_P=4$
	and $\lambda=3$.
	
	Next, for each $j\in\N$ choose
	\[
	I_1^j=\{(t\cos(2^{-j}\pi),t\sin(2^{-j}\pi)),\,t\in [-1,1]\}
	\]
	and then let
	\begin{equation}\label{eq:definition of E}
	E:=\bigcup_{j=1}^{\infty} \widetilde{I}_1^j=\bigcup_{j=1}^{\infty}2^{-2j-1}I_1^j.
	\end{equation}
	Consider any sequence $(u_i)\subset N^{1,1}(X)$ with $u_i\to \ch_E$
	in $L^1(X)$, with upper gradients $g_i$.
	We can also assume that $u_i\to \ch_E$ a.e. Thus for each
	$j\in \N$ we can choose a point $x_j\in \widetilde{I}_1^j$, $x_j\neq 0$ and a point $x_j'$ in $\widetilde{A}_j\setminus \widetilde{I}_1^j$ 
	such that 
	\begin{itemize}
	\item[(1)] $u_i(x_j)\to 1$ as $i\to\infty$;
	\item[(2)] $u_i(x_j')\to 0$ as $i\to \infty$;
	\item[(3)] the curves $\gamma_j$ joining $x_j'$ and $x_j$ only intersect
	at the origin.
	\end{itemize}
	  
	Now
	\[
	\int_{X}g_i\,d\mathcal H^1
	\ge \sum_{j=1}^{\infty}\int_{\gamma_j}g_i\,d\mathcal H^1
	\ge \sum_{j=1}^{\infty}|u_i(x_j')-u_i(x_j)|\to \infty\quad\textrm{as }i\to\infty.
	\]
	Hence $\Vert D\ch_E\Vert(X)=\infty$.

	It is easy to check that
	$0\notin \partial^*E$ and then in fact $\partial^*E=\emptyset$.
	This shows that the ``if'' direction of
	Federer's characterization does not
	hold without the doubling condition.
	
	On the other hand, $\pV(\ch_E,X)=1$ since only a curve intersecting
	$0$
	can give nonzero variation.
	Thus we do need condition \eqref{eq:blow up condition liminf} in Proposition
	\ref{prop:only if direction for E} and Proposition \ref{prop:only if direction}.
\end{example}

The following example shows that the Poincar\'e inequality cannot be dropped in the implication from $\mathcal H(\partial^*E)<\infty$ to $\Vert D\ch_E\Vert(X)<\infty$ either.

	\begin{example}\label{ex:space with doubling}
	Equip the set $X$ in \eqref{set} with the metric inherited
	from $\R^2$ and the measure $\mathcal H^1$.
	In this case, we will show that $\mathcal H^1$ is doubling on $X$,
	but $X$ does not support any Poincar\'e inequality since it is clearly
	not quasiconvex (recall Definition \ref{def:poincare} and the paragraph
	after it).
	Let $x\in X$.
	If $x\neq 0$, we have $2^{-2k-3}\le d(x,0)\le 2^{-2k-1}$ for some $k\in\N$.
	Suppose first that $r\le 2^{-2k-4}$.
	Recalling the notation from the previous example,
	note that $\widetilde{A}_{k}$ consists of
	$2^{k}$ line segments, which are at angles $2\pi\times 2^{-k-1}$ from each other.
	By simple geometric reasoning we see that the ball $B(x,r/2)$
	is intersected by at least
	\[
	\frac{r}{2}\times 2^{2k-1}\times  (2\pi\times 2^{-k-1})^{-1}\ge 2^{3k-4}r
	\]
	line segments belonging to $\widetilde{A}_{k}$,
	each for a length at least $r/2$ inside $B(x,r)$.
	Thus
	\[
	\mathcal H^1(B(x,r))\ge 2^{3k-5}r^2.
	\]
	
	To prove a converse estimate, suppose still that
	$2^{-2k-3}\le d(x,0)\le 2^{-2k-1}$, and
	suppose that $2^{-3k-6}\le r\le 2^{-2k-4}$.
	We have $B(x,r)\cap \widetilde{A}_j=\emptyset$ for all $j\ge k+2$.
	Note that $\widetilde{A}_{k+1}$ consists of
	$2^{k+1}$ line segments, which are at angles $2\pi\times 2^{-k-2}$ from each other.
	Thus we can see that there are at most
	\[
	4r \times 2^{2k+4} \times (2\pi\times 2^{-k-2})^{-1}\le 2^{3k+6}r
	\]
	line segments intersecting $B(x,r)$, each for a length at most $2r$.
	Thus
	\[
	\mathcal H^1(B(x,r))\le 2^{3k+7}r^2.
	\]	
	Thus in total
	\begin{equation}\label{eq:upper and lower bound for measure of ball}
	2^{3k-5}r^2\le \mathcal H^1(B(x,r))\le 2^{3k+7}r^2,
	\end{equation}
	where the first inequality holds for all $r\le 2^{-2k-4}$
	and the second for all $2^{-3k-6}\le r\le 2^{-2k-4}$.
	
	Moreover, for every $k\in\N$,
	\[
	\mathcal H^1(B(0,2^{-2k-1}))\ge 2^{-2k-1} \mathcal H^1(A_k)
	=2^{-2k-1} 2^{k+1} =2^{-k}
	\]
	and so
	\begin{equation}\label{eq:measure of ball at origin}
	2^{-k}\le \mathcal H^1(B(0,2^{-2k-1}))
	\le \sum_{j=k}^{\infty}2^{-2j-1} \mathcal H^1(A_j)
	= \sum_{j=k}^{\infty}2^{-2j-1} 2^{j+1}=2^{-k+1}.
	\end{equation}
	From these, the doubling condition for balls centered at $0$ easily follows.
	Now assume again that $x\neq 0$,	
	so that $2^{-2k-3}\le d(x,0)\le 2^{-2k-1}$ for a given $k\in\N$.
	We consider four cases:
	\begin{enumerate}[(1)]
	\item If $R<2^{-3k-4}$, then $B(x,2R)$ consists of just one line segment
	and so
	\[
	\mathcal H^1(B(x,2R))=2\mathcal H^1(B(x,R)).
	\]
	\item If $2^{-3k-4}\le R\le 2^{-2k-5}$, then by
	\eqref{eq:upper and lower bound for measure of ball},
	\[
	2^{3k-5}R^2\le \mathcal H^1(B(x,R))\quad\textrm{and}\quad
	\mathcal H^1(B(x,2R))\le 2^{3k+7}(2R)^2,
	\]
	and so we have
	\[
	\mathcal H^1(B(x,2R))\le 2^{14}\mathcal H^1(B(x,R)).
	\]
	\item If $2^{-2k-5}<R\le 2^{-2k+1}$, then
	applying \eqref{eq:upper and lower bound for measure of ball}
	with $r=2^{-2k-5}$,
	\[
	\mathcal H^1(B(x,R))
	\ge \mathcal H^1(B(x,2^{-2k-5}))
	\ge 2^{3k-5}(2^{-2k-5})^2=2^{-k-15}
	\]
	and by \eqref{eq:measure of ball at origin},
	\[
	\mathcal H^1(B(x,2R))\le \mathcal H^1(B(0,2^{-2k+2}))\le 2^{-k+3},
	\]
	and so we have
	\[
	\mathcal H^1(B(x,2R))\le 2^{18}\mathcal H^1(B(x,R)).
	\]
	\item If $2^{-2k+1}<R\le 2^{-2}$ with $k\ge 2$ (note that $\diam X=2^{-2}$),
	we choose $j\le k$ such that $2^{-2j+1}<R \le 2^{-2j+3}$. 
	Note that $B(0,R/2)\subset B(x,R)\subset B(x,2R)\subset B(0,4R)$.
	Now by \eqref{eq:measure of ball at origin},
	\[
	\mathcal H^1(B(x,R))
	\ge \mathcal H^1(B(0,R/2))
	\ge \mathcal H^1(B(0,2^{-2j}))\ge 2^{-j}
	\]
	and
	\[
	\mathcal H^1(B(x,2R))\le \mathcal H^1(B(0,4R))
	\le \mathcal H^1(B(0,2^{-2j+5}))\le 2^{-j+4}.
	\]
	Thus
	\[
	\mathcal H^1(B(x,2R))\le 2^{4}\mathcal H^1(B(x,R)).
	\]
	\end{enumerate}
	In total, the doubling condition always holds with doubling constant
	$2^{18}$, when $x\neq 0$.

	Finally, define the set $E$ as in \eqref{eq:definition of E}.
	As before, we obtain that
	$\Vert D\ch_E\Vert(X)=\infty$, $\pV(\ch_E,X)=1$, and
	$\partial^*E=\emptyset$. Thus again we see that
	the ``if'' direction of Federer's characterization does not hold,
	and that condition \eqref{eq:blow up condition liminf} is
	needed in Proposition
	\ref{prop:only if direction for E} and Proposition \ref{prop:only if direction}.
\end{example}

\end{document}